\newtheorem{thm}{Theorem}
\newtheorem{lem}{Lemma}[section]
\theoremstyle{definition}
\newtheorem{dfn}[lem]{Definition}
\numberwithin{equation}{section}
\newcommand{\xn}{(x_1,\dots,x_n)}
\newcommand{\xyz}{(x,y,z)}
\newcommand{\ov}[1]{\overline{#1}}
\newcommand{\ovx}{\ov{x}}
\newcommand{\ovy}{\ov{y}}
\newcommand{\ovz}{\ov{z}}
\newcommand{\ovxn}{(\ov{x_1},\dots,\ov{x_n})}
\newcommand{\ovxyz}{(\ov{x},\ov{y},\ov{z})}
\newcommand{\Cn}{C_n}
\newcommand{\Cthree}{C_3}
\newcommand{\NTn}{\operatorname{C}_n^*}
\newcommand{\NTthree}{\operatorname{C}_3^*}
\newcommand{\CthreeI}{C_3^{(I)}}
\newcommand{\CthreeII}{C_3^{(II)}}
\newcommand{\DnI}{D_n^{(I)}}
\newcommand{\DnII}{D_n^{(II)}}
\newcommand{\Dnstar}{D_n^{*}}
\newcommand{\Cthreeo}{C_3^{\text{o}}}
\newcommand{\vol}{\operatorname{vol}}
\newcommand{\sgn}{\operatorname{sign}}
\title{On cyclic and nontransitive probabilities\thanks{Subject
Classification: 60C05, 91A60}}
\author{Pavle Vuksanovic\thanks{University of Illinois, 
email \url{pavlev2@illinois.edu}}
\and A.J. Hildebrand\thanks{University of Illinois, 
email \url{ajh@illinois.edu} (corresponding author)}}
\date{August 30, 2020}
\begin{document}

\maketitle
\begin{abstract}
Motivated by classical nontransitivity paradoxes, we call
an $n$-tuple $\xn \in[0,1]^n$ \emph{cyclic} if there exist independent
random variables $U_1,\dots, U_n$ with $P(U_i=U_j)=0$ for $i\not=j$ such
that $P(U_{i+1}>U_i)=x_i$ for $i=1,\dots,n-1$ and $P(U_1>U_n)=x_n$.
We call the tuple $\xn$ \emph{nontransitive} if it is cyclic and in
addition satisfies $x_i>1/2$ for all $i$.

Let $p_n$ (resp.~$p_n^*$) denote the probability that a randomly chosen
$n$-tuple $\xn\in[0,1]^n$ is cyclic (resp.~nontransitive).  We determine
$p_3$ and $p_3^*$ exactly, while for $n\ge4$ we give upper and lower
bounds for $p_n$ that show that $p_n$ converges to $1$ as $n\to\infty$.
We also determine the distribution of the smallest, middle, and largest
elements in a cyclic triple.
\end{abstract}

\section{Introduction}
\label{sec:introduction}

A classic example of a nontransitive probability paradox
is provided by the \emph{Efron dice}, a set of four dice invented
by Bradley Efron and popularized by Martin Gardner \cite{gardner1970}.
The Efron dice are six-sided dice with face values given as follows:
\begin{equation}
\label{eq:efron-dice}
\begin{split}
&A=\{0,0,4,4,4,4\},\quad
B=\{1,1,1,5,5,5\},\quad
\\
&C=\{2,2,2,2,6,6\},\quad
D=\{3,3,3,3,3,3\}.
\end{split}
\end{equation}
One can easily check that, with probability $2/3$ each, $B$ beats $A$,
$C$ beats $B$, and $D$ beats $C$ while, at the same time, $A$ beats $D$ with
probability $2/3$.  In this sense, the dice $A$, $B$, $C$, $D$ form a
nontransitive cycle.

More formally, if we let $A$, $B$, $C$, $D$ denote
independent discrete random variables that are uniformly distributed 
over the values listed in \eqref{eq:efron-dice}, then these variables
satisfy
\begin{equation}
\label{eq:efron-dice-probabilities}
P(B>A)=P(C>B)=P(D>C)=P(A>D)=\frac{2}{3}.
\end{equation}

Another classic example of nontransitivity is given by the 
following set of three-sided dice which seems to have first appeared 
(in a different, but equivalent, context) in a paper by Moon and Moser
\cite{moon-moser1967}:
\begin{equation}
\label{eq:moser-dice}
A=\{1,5,9\},\quad B=\{2,6,7\}, \quad C=\{3,4,8\}.
\end{equation}
The three-sided dice $A$, $B$, $C$ defined by 
\eqref{eq:moser-dice} form a nontransitive cycle with probabilities
\begin{equation}
\label{eq:moser-dice-probabilities}
P(B>A)=P(C>B)=P(A>C)=\frac{5}{9}.
\end{equation}

There now exists a large body of research motivated by such
nontransitivity phenomena; see \cite{angel-davis2018,
buhler2014, conrey2016, gilson2012, hulko2019, komisarski2021, lebedev2019,marengo2020} for some
recent work on this subject. 

It is natural to ask what the most ``extreme'' level of nontransitivity
is that can be achieved with constructions such as the Efron and
Moon-Moser dice.  Can one replace the probabilities $2/3$ and $5/9$ in
\eqref{eq:efron-dice-probabilities} and \eqref{eq:moser-dice-probabilities}
by even larger numbers?

To formalize this question, one can consider, for each
integer $n\ge3$, the quantity
\begin{equation}
\label{eq:pi-n-def}
\pi_n=\max \min\left(P(U_2>U_1),\dots,P(U_n>U_{n-1}), P(U_1>U_n)\right),
\end{equation}
where the maximum is taken over all sets of independent random variables 
$U_1,\dots,U_n$. 
The Efron and Moon-Moser dice constructions show that
$\pi_4\ge 2/3$ and $\pi_3\ge 5/9$.

The quantities $\pi_n$ were first investigated in the 1960s by  
Steinhaus and Trybula \cite{steinhaus-trybula1959}, Trybula
\cite{trybula1961,trybula1965}, Chang \cite{li-chien1961}, 
and Usiskin \cite{usiskin1964}
who showed that $\lim_{n\to\infty}\pi_n=3/4$ and determined the first few
values of $\pi_n$. It particular, the values of $\pi_3$ and $\pi_4$ are
given by 
\begin{equation}
\label{eq:pi-n-initial-values}
\pi_3=\frac{\sqrt{5}-1}{2},\quad \pi_4 = \frac{2}{3}.
\end{equation}
Thus the Efron dice construction is best-possible in the sense of
achieving the value $\pi_n$ for $n=4$; meanwhile, the Moon-Moser dice
construction for $n=3$ is not best-possible as $5/9<(\sqrt{5}-1)/2$.
More recently (see, e.g., \cite{bogdanov2010, komisarski2021, nagy2011})
it was shown that, for any $n\ge 3$,
\begin{equation}
\label{eq:pi-n-general-formula}
	\pi_n= 1-\frac{1}{4\cos^2(\pi/(n+2))}.
\end{equation}
Interestingly, the numbers $\pi_n$ defined by \eqref{eq:pi-n-def}
have come up independently in very different contexts such as graph theory
\cite{bondy2006, lovasz-pelikan1973, nagy2011} and theoretical computer science
\cite{kral2004, trevisan2004}. 

In this paper, we focus on another aspect of the nontransitivity
phenomenon that has received less attention in the literature, 
namely the question of which $n$-tuples  can be realized  as
a tuple of \emph{cyclic probabilities}
$(P(U_2>U_1),\dots,P(U_n>U_{n-1}), P(U_1>U_n))$, and how common such tuples are among all $n$-tuples in $[0,1]^n$.
We introduce the following definitions:

\begin{dfn}[Cyclic and nontransitive tuples]
\label{def:cyclic-tuples}
Let $n$ be an integer with $n\ge3$.
        \begin{itemize}
        \item[(i)]
An $n$-tuple $\xn \in[0,1]^n$ is called
                \emph{cyclic} if there exist independent random variables
                        $U_1,\dots, U_n$ with $P(U_i=U_j)=0$ for $i\not=j$
			such that
\begin{equation}
	\label{eq:cyclic-tuples}
	P(U_{i+1}>U_i)=x_i\quad  (i=1,\dots,n-1)
	\text{ and } P(U_1>U_n)=x_n.
\end{equation}
\item[(ii)]
        An $n$-tuple $\xn \in[0,1]^n$ is called
        \emph{nontransitive} if it is cyclic and satisfies
$x_i>1/2$ for all $i$.
\end{itemize}
\end{dfn}

The dice examples \eqref{eq:efron-dice} and \eqref{eq:moser-dice} 
show that the $4$-tuple $(2/3,2/3,2/3,2/3)$ and the triple $(5/9,5/9,5/9)$ 
are nontransitive and, hence, also cyclic. The $n$-tuple 
$(1/2,1/2,\dots,1/2)$ is cyclic for any $n\ge3$ as can be seen by
taking $U_1,\dots,U_n$ to be independent random variables with a continuous
common distribution.

On the other hand, the tuple $(1,1,\dots,1)$ is not cyclic.
Indeed, otherwise there would exist random variables $U_i$ such that,
with probability $1$, both    $U_1<U_2<\dots <U_n$ and $U_n<U_1$ hold.
But the first of these relations implies that $U_1<U_n$ holds with
probability $1$, contradicting the second relation.  A similar
contradiction arises whenever the components of $\xn$ are sufficiently
close to $1$.

The first non-trivial case of 
Definition \ref{def:cyclic-tuples}
is the case $n=3$, i.e., the case of cyclic
\emph{triples}.  For this case, Trybula \cite{trybula1961} and, independently, 
Suck \cite{suck2002}, 
gave necessary and sufficient conditions for a triple $\xyz$ to be cyclic 
(see Lemma \ref{lem:trybula-characterization} below).  For $n\ge 4$ 
a complete characterization of cyclic $n$-tuples is not known, though 
some partial results are known (see, for example, Trybula
\cite{trybula1965}). 

In this paper we consider the question of how common cyclic and
nontransitive tuples are among all tuples in the $n$-dimensional unit
cube $[0,1]^n$.  Let 
$p_n$ (resp.~$p_n^*$) be the probability 
that an $n$-tuple $\xn$ chosen randomly and
uniformly from $[0,1]^n$ is cyclic (resp.~nontransitive).  These probabilities
are given by the volumes of the regions $\Cn$ (resp.~$\NTn$)
of cyclic (resp.~nontransitive) $n$-tuples
inside the unit cube $[0,1]^n$; that is, we have
\begin{equation}
\label{eq:pn-definition}
p_n=\vol(\Cn), \quad p_n^*=\vol(\NTn),
\end{equation}
where 
\begin{align}
\label{eq:Cn-definition}
\Cn&=\{\xn\in[0,1]^n: \text{ $\xn$ is cyclic}\},
\\
\label{eq:NTn-definition}
\NTn&=\{\xn\in[0,1]^n: \text{ $\xn$ is nontransitive}\}.
\end{align}

In our first theorem, we determine the probabilities $p_3$ and
$p_3^*$ exactly.

\pagebreak[3]

\begin{thm}
\label{thm:main1}
\mbox{}
\begin{itemize}
\item[(i)] 
A random triple $(x_1,x_2,x_3)$ in $[0,1]^3$ is cyclic with probability 
\begin{align}
\label{eq:vol-cyclic3tuples}
        p_3&=\frac{11\sqrt{5}}{4}-\frac{17}{4}
	-6\ln(\sqrt{5}-1)\approx
 0.627575\dots.
\end{align}
\item[(ii)] 
A random triple $(x_1,x_2,x_3)$ in $[0,1]^3$ is nontransitive with probability 
\begin{align}
\label{eq:vol-nontransitive3tuples}
  p_3^*&=\frac{11\sqrt{5}}{8}-\frac{43}{16}
	-3\ln(\sqrt{5}-1)+\frac{3\ln 2}{8}
        \approx 0.011217\dots
\end{align}
\end{itemize}
\end{thm}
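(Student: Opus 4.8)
The plan is to compute $p_3$ and $p_3^*$ as three-dimensional integrals over the explicit region supplied by Lemma \ref{lem:trybula-characterization}, which describes $\Cthree$ as the subset of $[0,1]^3$ cut out by the linear inequalities $1\le x_1+x_2+x_3\le 2$ together with one further inequality whose boundary is a quadric surface having $\sqrt5$ among its coefficients; write $Q$ for the region this last inequality defines, so that $\Cthree=\{1\le x_1+x_2+x_3\le 2\}\cap Q\cap[0,1]^3$. Everything after that is an elementary but lengthy volume computation, and the real task is to organize it so that the casework stays finite and the algebra with $\sqrt5$ stays honest.

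The first move is to use symmetry. Each of the defining inequalities is invariant under permuting the coordinates and under the complementation $(x_1,x_2,x_3)\mapsto(1-x_1,1-x_2,1-x_3)$ --- probabilistically, the first from relabelling or reversing the $U_i$, the second from replacing each $U_i$ by $-U_i$ --- so $\Cthree$ is invariant under the resulting group of order $12$. Complementation exchanges $\{x_1+x_2+x_3\le 3/2\}$ with $\{x_1+x_2+x_3\ge 3/2\}$, so $p_3=2\,\vol\bigl(\Cthree\cap\{x_1+x_2+x_3\le 3/2\}\bigr)$; on this half the bound $x_1+x_2+x_3\le 2$ is automatic and $Q$ merely deletes a lobe near the origin. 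I would then integrate $x_3$ out last: for fixed $(x_1,x_2)$ the admissible values of $x_3$ form an interval whose endpoints are the pointwise maximum of the lower bounds $\{0,\ 1-x_1-x_2,\ \text{the branch of }Q\text{ solved for }x_3\}$ and the pointwise minimum of the matching upper bounds, and since $Q$ is quadratic that branch is a rational function of $(x_1,x_2)$ with linear denominator $1-x_1-x_2$, whose sign must be tracked. This partitions the base square $[0,1]^2$ into finitely many pieces --- bounded by lines and by conics with $\sqrt5$ in their coefficients --- on each of which the $x_3$-interval length $L(x_1,x_2)$ is a single rational expression, so that $p_3=2\iint L\,dx_1\,dx_2$, with the coordinate symmetry ($x_1\le x_2$) used once more to halve the number of pieces. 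Integrating the polynomial parts gives the rational and $\sqrt5$ contributions; integrating the genuinely rational parts, of the shape $\frac{\text{poly}}{1-x_1-x_2}$, gives logarithms, and evaluating these at the piece boundaries --- whose coordinates are golden-ratio numbers built from $(\sqrt5-1)/2$ --- collapses them into the term $-6\ln(\sqrt5-1)$ in \eqref{eq:vol-cyclic3tuples}. (Permutation invariance also makes the coordinates of a uniform point of $\Cthree$ exchangeable, which is what later permits speaking of its smallest, middle, and largest coordinate.)

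For part (ii) one repeats this over $\NTthree=\Cthree\cap(1/2,1]^3$. Now $x_i>1/2$ for every $i$ forces $x_1+x_2+x_3>3/2$, so $\NTthree$ lies entirely in the large-sum half: the complementation trick no longer saves a factor of $2$, only the permutation symmetry does, and the active constraints become $1/2<x_i\le 1$, $x_1+x_2+x_3\le 2$, and $Q$. Running the same scheme over this small region yields $p_3^*$; the new feature is that many integration limits now sit at $x_i=1/2$ rather than at $0$, $1$, or a golden-ratio value, and the $\ln(1/2)$ terms this produces are the source of the extra $\tfrac{3\ln 2}{8}$ summand, and of the changed rational coefficients, in \eqref{eq:vol-nontransitive3tuples}.

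I expect the main obstacle to be precisely this bookkeeping: deciding, for each $(x_1,x_2)$, which competing bound on $x_3$ actually wins --- hence partitioning the base square correctly and identifying the curved boundary of each piece --- and then carrying out the two-variable integration without sign or limit errors in the presence of $\sqrt5$. Every individual integral is routine, a polynomial or an $\int\frac{\text{poly}(s)}{1-s}\,ds$, but there are many of them; the symmetry reductions above, together with a final independent check against the stated decimal values $0.627575\ldots$ and $0.011217\ldots$, are what make the computation trustworthy. The nontransitive region is the more delicate of the two --- small, but relatively more curved --- so the cancellations producing its tidy closed form require the most care.
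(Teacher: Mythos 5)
Your overall strategy---exploit the permutation and complementation symmetries of $\Cthree$ and then compute the volume of Trybula's region by iterated integration over finitely many pieces of the base square---is the same in spirit as the paper's proof, which reduces everything to the two pieces $\CthreeI$ and $\CthreeII$. But your starting point misstates Lemma \ref{lem:trybula-characterization}, and the error is substantive, not cosmetic. The lemma does not describe $\Cthree$ as the linear band $1\le x_1+x_2+x_3\le 2$ intersected with one further inequality whose boundary is a quadric having $\sqrt5$ among its coefficients. It says that $(x,y,z)$ is cyclic iff \eqref{eq:cyclic-bound1} and \eqref{eq:cyclic-bound2} hold; after ordering $x\le y\le z$ (Lemma \ref{lem:cyclic-characterizations}) these become the two quadratic inequalities $x+yz\le 1$ and $(1-z)+(1-x)(1-y)\le 1$, equivalently $x+y+z\le 2-(1-y)(1-z)$ and $x+y+z\ge 1+xy$. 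The band $1\le x_1+x_2+x_3\le 2$ is only a necessary consequence and is far from sufficient: the point $(0.2,0.4,0.45)$ has coordinate sum $1.05$, yet each of the quantities $\ov{x_i}+\ov{x_j}\,\ov{x_k}$ equals $1.13$, $1.04$, $1.03$, so \eqref{eq:cyclic-bound2} fails and the triple is not cyclic. Moreover, none of the defining inequalities has $\sqrt5$ in its coefficients; the boundary surfaces are the quadrics $x_i+x_jx_k=1$ and $x_k=(1-x_i)(1-x_j)$, and $\sqrt5$ enters only through the integration limit $\omega=(\sqrt5-1)/2$, the root of $\omega^2+\omega=1$, which marks where a constraint such as $x+yz\le 1$ with $y,z\ge x$ becomes infeasible. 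Integrating over the region you describe would therefore yield the wrong volumes.

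A second, related slip: solving the true constraints for the last variable gives an upper bound $(1-x)/y$ (denominator the middle variable) and a polynomial lower bound $(1-x)(1-y)$, not a rational expression with denominator $1-x_1-x_2$; the logarithms in \eqref{eq:vol-cyclic3tuples} arise from $\int (1-x)/y\,dy=(1-x)\ln y$ followed by the $x$-integration evaluated at $x=1/2$ and $x=\omega$, not from poles along the plane $x_1+x_2=1$. Your symmetry bookkeeping (the factor $2$ from complementation for $p_3$, permutations only for $p_3^*$, the $\ln 2$ terms coming from limits at $1/2$) is sound and matches the paper's reductions $\vol(\Cthree)=6\vol(\CthreeI)+6\vol(\CthreeII)$ and $\vol(\NTthree)=3\vol(\CthreeI)$, but as written the computation rests on an incorrect description of the region and would have to be rebuilt from the correct characterization before it could be credited.
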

In particular, the theorem shows that a random triple in $[0,1]^3$ is
more likely than not to be cyclic, while only about 1\% of
all such triples are nontransitive.

In our second theorem, we consider the case of  general $n\ge4$. 
We derive upper and lower bounds for the probabilities $p_n$ that
show that $p_n$ converges to $1$ as $n\to\infty$.

\begin{thm}
\label{thm:main2}
For any integer $n\ge 4$ the probability $p_n$ that a random $n$-tuple 
$\xn\in[0,1]^n$ is cyclic satisfies
\begin{equation}
\label{eq:vol-cyclic-n-tuples}
	1-3\left(\frac{2}{\pi}\right)^n\le 
	p_n\le 1-2\left(\frac14\right)^n.
\end{equation}
In particular, $p_n$ converges exponentially to $1$ as $n\to\infty$.
\end{thm}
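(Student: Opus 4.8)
The plan is to prove the upper and lower bounds separately, each via an explicit probabilistic construction tailored to produce either cyclic or non-cyclic tuples with controlled probability.

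\medskip

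\textbf{Lower bound.} To show $p_n \ge 1 - 3(2/\pi)^n$, I would exhibit a large sub-region of $[0,1]^n$ consisting of cyclic tuples. A natural idea is to interpret a target tuple $\xn$ as a list of prescribed pairwise-comparison probabilities along a cycle $U_1 \to U_2 \to \dots \to U_n \to U_1$, and to try to realize it using simple two-valued or low-complexity random variables. The cleanest approach: note that $\pi_n \to 3/4$ and, more relevantly, that \emph{any} single probability value in some interval around $1/2$ can be realized along a cycle with enough slack. In fact, I would look for a ``diagonal'' or ``product'' construction: given that $(1/2,\dots,1/2)$ is cyclic (via i.i.d.\ continuous variables) and that one can perturb, I would try to show that a box of the form $\prod_i [a_i, b_i]$ around a known cyclic point lies in $C_n$, and estimate its volume. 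The exponent $(2/\pi)^n$ strongly suggests that the complementary (non-cyclic) region is being covered by something whose volume involves $\arcsin$ or a trigonometric integral — e.g.\ the region where some partial product of the $x_i$ (or of $1-x_i$) forces the kind of contradiction seen for $(1,\dots,1)$. So more likely the strategy is the contrapositive: identify a \emph{simple sufficient condition} for non-cyclicity (generalizing the $(1,1,\dots,1)$ argument: if the $x_i$ are jointly too close to $1$, or too close to $0$, one gets $U_1 < U_n$ forced one way while $P(U_1>U_n)=x_n$ demands the other), show the set of tuples violating cyclicity is contained in a union of a few simple regions, and bound the volume of each by $(2/\pi)^n$. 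The constant $3$ then comes from taking a union bound over $O(1)$ such regions (perhaps: ``too close to all-ones,'' ``too close to all-zeros,'' and one more symmetric case).

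\medskip

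\textbf{Upper bound.} To show $p_n \le 1 - 2(1/4)^n$, I would isolate an explicit sub-region of $[0,1]^n$ of volume $\ge 2(1/4)^n$ that contains \emph{no} cyclic tuple. The factor $(1/4)^n = \prod (1/4)$ suggests a box such as $\xn \in [3/4,1]^n$ (or its reflection $[0,1/4]^n$), giving two disjoint boxes each of volume $(1/4)^n$, hence the constant $2$. So the core claim to establish is: \emph{if $x_i > 3/4$ for all $i$ (or $x_i < 1/4$ for all $i$), then $\xn$ is not cyclic.} This should follow from a pigeonhole/averaging argument: if $P(U_{i+1}>U_i) > 3/4$ for every $i$ around the cycle, then summing or multiplying these constraints and using inclusion–exclusion on the events $\{U_{i+1}>U_i\}$ forces $P(U_1 > U_n)$ to be small — in fact one expects $P(U_1>U_n) \le \sum_{i=1}^{n-1} P(U_i > U_{i+1}) < (n-1)/4$, which is useless for large $n$, so a sharper argument is needed. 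The right tool is probably the observation that along a chain $U_1, \dots, U_n$ one has $P(U_1 < U_n) \ge 1 - \sum_{i=1}^{n-1} P(U_i > U_{i+1})$, but combined with the trivial bound this still degrades. I suspect instead one uses the known result $\pi_n < 3/4$ directly: by \eqref{eq:pi-n-general-formula}, $\pi_n = 1 - 1/(4\cos^2(\pi/(n+2))) < 3/4$ for all $n$, so it is impossible to have $\min(x_1,\dots,x_n) \ge 3/4$ with $\xn$ cyclic. Hence $\{x_i \ge 3/4 \ \forall i\}$ is cyclic-free, and by the symmetry $x_i \leftrightarrow 1-x_i$ (reversing the cycle) so is $\{x_i \le 1/4 \ \forall i\}$; these two boxes are disjoint and have total volume $2(1/4)^n$.

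\medskip

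\textbf{Main obstacle.} The upper bound is then essentially immediate once the $x_i \mapsto 1-x_i$ symmetry of $C_n$ is verified (which follows by replacing each $U_i$ with $-U_i$ and relabeling). The real work is the lower bound: I need a genuine \emph{sufficient} condition for cyclicity that covers all but a $3(2/\pi)^n$-fraction of the cube. The hard part will be designing random variables $U_1,\dots,U_n$ flexible enough to hit an arbitrary tuple in a large box — likely one takes $U_i$ supported on a few points with carefully chosen gaps, reduces realizing $\xn$ to solving a system of inequalities, and shows this system is solvable whenever $\xn$ avoids the ``extreme corners.'' Controlling the volume of the exceptional set and getting exactly the constant $(2/\pi)^n$ (which smells of $\bigl(\int_0^{1/2}\text{something}\bigr)$ or a Wallis-type factor) will require the most care; a plausible route is to bound the bad set by $\{\prod_i \sin(\pi x_i) \le \text{small}\}$-type regions and integrate, or to use the $n=3$ characterization (Lemma \ref{lem:trybula-characterization}) as a building block, realizing the $n$-cycle by gluing triangle-inequality-type conditions and tracking where they can fail.
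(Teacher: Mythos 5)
Your upper bound argument is correct and is exactly the paper's: by \eqref{eq:pi-n-general-formula} every cyclic tuple has $\min\xn\le\pi_n<3/4$, so $[3/4,1]^n$ is cyclic-free, and the complementation symmetry $U_i\mapsto -U_i$ (Lemma \ref{lem:symmetry-ntuples}(iii)) makes $[0,1/4]^n$ cyclic-free as well, giving $p_n\le 1-2(1/4)^n$.

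The lower bound, however, is where the real content lies, and your proposal has a genuine gap there: you never produce either a concrete sufficient condition for cyclicity or a concrete description of the exceptional set, and the candidates you float would not work. In particular, the non-cyclic tuples are \emph{not} confined to neighborhoods of the all-ones and all-zeros corners (nor to $O(1)$ regions each of volume $(2/\pi)^n$, which is where you guess the constant $3$ comes from); for instance, tuples alternating near $1,0,1,0,\dots$ in a suitable sense can also fail to be cyclic, and nothing in your sketch certifies cyclicity for the bulk of the cube. The paper's route is: (1) an explicit construction of discrete random variables showing that $\xn$ is cyclic whenever some index $i$ has $x_i+x_{i+1}\ge 1$ and $x_{i+2}+x_{i+3}\le 1$ (Lemma \ref{lem:up-down-construction}); (2) a parity argument on the consecutive sums $s_i=x_i+x_{i+1}$ showing that if such an index does not exist then either all $s_i<1$ or all $s_i>1$ (Lemma \ref{lem:non-cyclic-condition}), so the non-cyclic set lies in $\DnI\cup\DnII$; (3) an exact evaluation $\vol(\Dnstar)=A_{n-1}/((2n)(n-1)!)$ by a change of variables mapping the region with all consecutive sums $<1$ onto the set of up-down alternating orderings of the cube, where $A_{n-1}$ counts alternating permutations; and (4) Andr\'e's classical formulas, which give $A_{n-1}/(n-1)!\le 3(2/\pi)^n$ --- this, not a union bound, is the true source of both the base $2/\pi$ and the constant $3$ (the $3$ absorbs the factor $2\cdot\frac32$ bounding the odd-index series). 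Your instinct that designing flexible discrete $U_i$ and identifying where the construction fails is the hard part is right, and your guess that $(2/\pi)^n$ has a trigonometric origin is vindicated (it is the growth rate of the tangent/secant numbers), but without the consecutive-sum criterion, the reduction to $\DnI\cup\DnII$, and the alternating-permutation count, the proposal does not reach the stated bound.
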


In our final result we consider the distribution of the smallest,
middle, and largest elements of a cyclic triple. Let 
$(X_1,X_2,X_3)$ be a random vector that is
uniformly distributed over the region
$\Cthree$ of all cyclic triples, and let $(X_1^*, X_2^*,X_3^*)$ be the
order statistics of $(X_1,X_2,X_3)$. Thus, $X_i$ is the $i$-th
coordinate of a random cyclic triple, while $X_i^*$ is the $i$-th
smallest among the three coordinates of a random cyclic triple, where 
``random'' is to be interpreted with respect to the usual Lebesgue
measure on $\Cthree$. 

In particular, $X_1^*$ is the smallest element of a random cyclic
triple, and the triple $(X_1,X_2,X_3)$  is nontransitive if and only
if $X_1^*>1/2$.  Thus it is of interest to determine the precise
distribution of the random variable $X_1^*$. More generally, in the
following theorem we determine the density function $f_i(x)$ for each of
the three random variables $X_i^*$, $i=1,2,3$.

\begin{thm}
\label{thm:main3}
Let $f_1(x)$, $f_2(x)$, and $f_3(x)$ denote, respectively,
the density function of the smallest, middle, and largest element 
of a random cyclic triple; i.e., $f_i(x)$ is the density of the random
variable $X_i^*$ defined above.  Then: 
\begin{align}
\label{eq:min-density}
    f_1(x)&=
    \begin{cases}
    \frac 3 {p_3} \left( x^3-3x^2 + \frac {1-x} {2-x} - (1-x) \ln (1-x)
    \right)
    &\text{if $0 \le x \le \frac{3-\sqrt 5}{2}$,}
    \\[.5ex]
    \frac 3 {p_3} \left( x^2-3x+1 - (1-x)\ln (1-x) \right)
    &\text{if 
    $\frac{3-\sqrt 5}{2} <x \le \frac12$,}
    \\[.5ex]
    \frac 3 {p_3} \left( x^2+x-1+(1-x) \ln (1-x)-2(1-x)\ln x \right)
    &\text{if $\frac12 < x \le \frac{\sqrt 5 - 1}2$,}
	\\[.5ex]
	0&\text{otherwise;}
    \end{cases}
    \displaybreak[3]
    \\[1ex]
    \label{eq:middle-density}
    f_2(x)&= 
    \begin{cases}
        \frac 3 {p_3} \left( 3x^2-x^3 \right)
	&\text{if $ 0 \le x \le \frac{3-\sqrt 5}2$,}
	\\[.5ex]
        \frac 6 {p_3} \left( 3x-x^2-\frac 1 {2(1-x)} \right)
	&\text{if $\frac{3-\sqrt 5}2 < x \le \frac12$,}
	\\[.5ex]
	f_2(1-x) &\text{if $\frac12 < x \le 1$,}
	\\[.5ex]
	0&\text{otherwise;}
    \end{cases}
    \\[1ex]
    \label{eq:max-density}
    f_3(x)&= f_1(1-x),
\end{align}
where $p_3$ is given by \eqref{eq:vol-cyclic3tuples}.
\end{thm}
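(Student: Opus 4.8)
The plan is to compute the densities $f_i(x)$ by slicing the region $\Cthree\subset[0,1]^3$ with the hyperplane corresponding to fixing one order statistic. The natural starting point is Trybula's characterization (Lemma~\ref{lem:trybula-characterization}), which gives explicit inequalities describing when a triple $(x,y,z)$ is cyclic. Since this characterization should be symmetric in the three coordinates (the cyclic condition is invariant under cyclic permutation, and in fact under all permutations, since reversing the cycle corresponds to replacing each $x_i$ by $1-x_i$ in reverse order, but more to the point the Trybula region is symmetric), I would first establish that $\Cthree$ is symmetric under all permutations of coordinates. This symmetry immediately reduces the problem: if $g(x)$ denotes the density of a single coordinate $X_i$ (the same for each $i$ by symmetry) and, more usefully, if $F(t)=\vol(\Cthree\cap\{x_1\le t, x_2\le t, x_3\le t\})/p_3$ and similar joint quantities are computed, then the order-statistic densities follow from inclusion–exclusion.

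Concretely, I would proceed as follows. First, write down the Trybula inequalities explicitly and identify, for a fixed value $x=t$ of one coordinate, the two-dimensional cross-sectional area $A(t)$ of $\Cthree$ in the remaining two coordinates; by the coordinate symmetry this area is well-defined independent of which coordinate is fixed. Then the key combinatorial identities are: for the minimum, $P(X_1^*>t)=\vol(\Cthree\cap(t,1]^3)/p_3$, and for the maximum, $P(X_3^*\le t)=\vol(\Cthree\cap[0,t]^3)/p_3$; the relation $f_3(x)=f_1(1-x)$ will drop out of the reflection symmetry $(x,y,z)\mapsto(1-x,1-y,1-z)$ of $\Cthree$ (which holds because reversing a cyclic tuple negates-and-complements it). For the middle element one uses $P(X_2^*\le t)=\sum_i P(X_i\le t)-2\,P(\text{at least two }\le t)+\cdots$; more cleanly, $f_2(x)=\frac{d}{dx}\big[3\,P(X_1\le x)-2\,P(X_2^*\le x)-P(X_3^*\le x)\big]$ type formula, or directly differentiate the CDF of the median. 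The self-symmetry $f_2(x)=f_2(1-x)$ again follows from the reflection symmetry.

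The computations then reduce to evaluating volumes of the form $\vol(\Cthree\cap[0,t]^3)$ and $\vol(\Cthree\cap(t,1]^3)$ as functions of $t$, which requires integrating the Trybula cross-section over a box. The breakpoints $\frac{3-\sqrt5}{2}$, $\frac12$, and $\frac{\sqrt5-1}{2}=\pi_3$ in the piecewise formulas are exactly the places where the shape of $\Cthree\cap[0,t]^3$ or $\Cthree\cap(t,1]^3$ changes combinatorially — e.g., $\frac{\sqrt5-1}{2}$ is the largest value a coordinate of a nontransitive triple can take, and $\frac{3-\sqrt5}{2}=1-\pi_3$ is its complement. I would handle each regime by carefully setting up the double integral over the remaining two coordinates subject to the Trybula constraints intersected with the box $[0,t]^2$ or $(t,1]^2$, and then differentiate in $t$. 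The logarithmic terms $(1-x)\ln(1-x)$ and $(1-x)\ln x$ signal that at least one of these inner integrals produces a logarithm, which happens when a Trybula boundary is a rational curve such as $z=\frac{\text{something}}{x+y-\text{something}}$ — consistent with the $\frac{1-x}{2-x}$ term already visible in $f_1$.

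The main obstacle I expect is the bookkeeping in the first regime $0\le x\le\frac{3-\sqrt5}{2}$ for $f_1$ (equivalently the last regime for $f_3$): here the box $(t,1]^3$ with small $t$ contains essentially all of the "interesting" part of $\Cthree$, and the cross-section of $\Cthree$ restricted to this box is bounded by several Trybula curves simultaneously, so the double integral splits into multiple pieces whose boundaries must be tracked exactly to produce the combination $x^3-3x^2+\frac{1-x}{2-x}-(1-x)\ln(1-x)$. Verifying continuity of $f_1$ at the breakpoints $\frac{3-\sqrt5}{2}$ and $\frac12$ provides a useful internal check, and the normalization $\int_0^1 f_1=1$ (which should recover a relation tied to $p_3$) gives a final consistency test.
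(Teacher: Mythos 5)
Your overall strategy is viable and, carried out carefully, would yield the theorem, but it is organized quite differently from the paper's proof and is somewhat heavier. The paper never differentiates CDFs of box-intersected volumes: it passes to the ordered region $\Cthreeo=\{(x,y,z)\in\Cthree:\ x\le y\le z\}$, observes that the order statistics of a uniform point of $\Cthree$ are distributed as a uniform point of $\Cthreeo$ (which has volume $p_3/6$), and derives from Lemma~\ref{lem:cyclic-characterizations} an explicit nested description of $\Cthreeo$ in two forms (Lemma~\ref{lem:ordered-cyclic-characterizations}), one resolved with respect to the smallest coordinate and one with respect to the middle coordinate. Each density is then literally $\tfrac{6}{p_3}$ times a two-dimensional cross-sectional area with explicit limits, e.g. $x\le y\le\sqrt{1-x}$ and $\max\bigl(y,(1-x)(1-y)\bigr)\le z\le\min\bigl(1,(1-x)/y\bigr)$ for $f_1$, so no three-dimensional parametric volume, no differentiation in $t$, and no inclusion--exclusion is needed; the relations $f_3(x)=f_1(1-x)$ and $f_2(x)=f_2(1-x)$ come from the complementation symmetry, as you say (though that symmetry comes from $U_i\mapsto -U_i$, not from reversing the cycle). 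Your route costs more for $f_2$: you would additionally need the one-coordinate marginal $g$, and the combinatorial identities you quote are garbled as written --- the correct ones are $P(X_2^*\le t)=\sum_{i<j}P(X_i\le t,\,X_j\le t)-2\,P(X_3^*\le t)$, or equivalently $f_2=3g-f_1-f_3$ --- though these slips are easily repaired. The real content in either approach is the regime-by-regime simplification of Trybula's conditions and the resulting elementary integrals with breakpoints $\tfrac{3-\sqrt5}{2}$, $\tfrac12$, $\tfrac{\sqrt5-1}{2}$, which you correctly identify but do not carry out; the paper's Lemma~\ref{lem:ordered-cyclic-characterizations} is precisely the bookkeeping device that makes those integrals mechanical, and some analogue of it would have to be proved in your setup as well.
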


\begin{figure}[H]
   \begin{center}
    \includegraphics[width = .45 \textwidth]{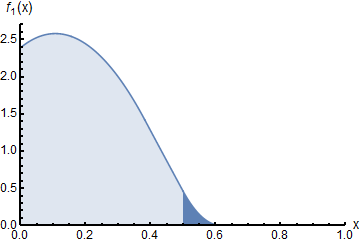}
    \hspace{1em}
    \includegraphics[width = .45\textwidth]{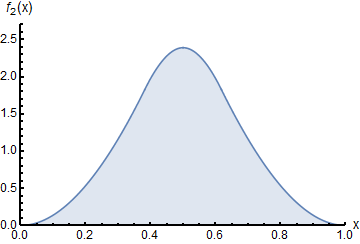}
\end{center}
\caption{The density functions $f_1(x)$ and $f_2(x)$ of the  
smallest,  respectively middle, value in a random cyclic triple. 
The dark-shaded
portion of the graph on the left corresponds to nontransitive triples.}
\label{fig:density-plots} 
\end{figure}

The densities $f_1(x)$ and $f_2(x)$ are shown
in Figure \ref{fig:density-plots}.
The dark-shaded portion of the graph
of $f_1(x)$ is the portion of this density corresponding to
nontransitive triples. As can be seen from the graph, the contribution 
of such triples to cyclic triples is quite small. This is consistent
with the result of Theorem \ref{thm:main1}, which shows that
nontransitive triples make up a proportion of only around
$0.627575/0.011217\approx 1/55$ of all cyclic triples. 

Formula \eqref{eq:min-density} shows that the  density $f_1(x)$
is supported on the interval $[0,(\sqrt{5}-1)/2]$. Note that 
the right endpoint of this interval, $(\sqrt{5}-1)/2$, is equal to the
number $\pi_3$  defined above (see \eqref{eq:pi-n-def} and
\eqref{eq:pi-n-initial-values}).  Figure \ref{fig:density-plots} shows
that the density $f_1(x)$ is strictly positive on the entire 
interval $[0,(\sqrt{5}-1)/2)$, and that it has a unique mode 
(i.e., local maximum) located at around $0.1$.

It is interesting to compare the distribution of $X_1^*$, the smallest element in a random \emph{cyclic} triple in $[0,1]^3$, to that of $X^*$, the smallest element in an \emph{unrestricted} random triple in $[0,1]^3$. An easy calculation shows that $X^*$ has density function  $f(x)=3(1-x)^2$, 
mean $1/4$, and median $1-2^{-1/3}=0.206\dots$. In contrast to the density function $f_1(x)$, the latter density function is supported on the entire interval $[0,1]$
and is strictly decreasing on this interval.  Further statistics on the distributions $f_1(x)$ and $f(x)$ are given in Table \ref{table:X1-statistics}. 
\begin{table}[ht]
    \centering
    \renewcommand{\arraystretch}{1.5}
    \addtolength{\tabcolsep}{3pt}
    \begin{tabular}{|c||c|c|c|}
        \hline
         Random Variable &Expected Value & Median & Mode \\
         \hline\hline
         $X_1^*$ &
         $0.211 \dots$ & $0.197 \dots$ & $0.107 \dots$ \\
         \hline
         $X^*$ &
         $0.25 \qquad$ & $0.206\dots$ & 0
         \\
         \hline
    \end{tabular}
    \caption{Statistics on the distributions of $X_1^*$, the smallest element
    in a random cyclic triple, and $X^*$, the smallest element 
    in a random triple in $[0,1]^3$.}
    \label{table:X1-statistics}
\end{table}

The graph on the right of Figure \ref{fig:density-plots} shows
the density of the middle value in a random cyclic triple.
As can be seen from formula \eqref{eq:middle-density},  
this distribution is symmetric with respect to the line $x=1/2$, and it
is supported on the full interval $[0,1]$.

Formula \eqref{eq:max-density} shows that the distribution of the 
largest value in a random cyclic triple, up to a reflection at the line
$x=1/2$, is the same as the distribution of the smallest value. 
This is a consequence of the symmetry properties of cyclic triples 
(cf. Lemma \ref{lem:symmetry-ntuples} below).

\bigskip

The remainder of this paper is organized as follows.  In Section
\ref{sec:auxiliary-results} we prove some elementary properties of
cyclic $n$-tuples, we present Trybula's characterization of cyclic
triples, and we derive a simplified form of this characterization
under additional assumptions on the triples.  Sections
\ref{sec:proof-theorem1}--\ref{sec:proof-theorem3} contain the proofs of
our main results, Theorems \ref{thm:main1}--\ref{thm:main3}.  We
conclude in Section \ref{sec:concluding-remarks} with a discussion of some
related questions and open problems suggested by our results.

\section{Auxiliary results}
\label{sec:auxiliary-results}

We begin by deriving some elementary properties of cyclic $n$-tuples.
Here, and in the remainder of the paper, we make the convention that 
subscripts of $n$-tuples are to be interpreted modulo $n$.
Thus, for example, the definition \eqref{eq:cyclic-tuples} of a 
cyclic $n$-tuple $(x_1,\dots,x_n)$ can be written more concisely as 
\[
	P(U_{i+1}>U_i)=x_i\quad (i=1,\dots,n).
\]
Given a real number $t\in [0,1]$, we write
\begin{equation}
\label{def:xbar}
\ov{t}=1-t.
\end{equation}

\begin{lem}
\label{lem:symmetry-ntuples}
Let $\xn\in[0,1]^n$ be a cyclic $n$-tuple.  Then:
\begin{itemize}
\item[(i)] Any cyclic permutation of $\xn$ is cyclic; that is, for any
        $i\in\{1,2,\dots,n\}$, the tuple $(x_i,x_{i+1},\dots,x_{i+n})$
                (with subscripts interpreted modulo $n$) is cyclic as well.
        \item[(ii)] The ``reverse'' tuple
                $(x_n,x_{n-1},\dots,x_1)$ is cyclic.
\item[(iii)] The ``complementary'' tuple $\ovxn$ is cyclic.
\end{itemize}
\end{lem}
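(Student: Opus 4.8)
The plan is to prove all three parts directly from Definition~\ref{def:cyclic-tuples}: in each case I take the independent random variables $U_1,\dots,U_n$ that witness the cyclicity of $\xn$ and produce from them, by a relabeling of the indices and/or a sign change, a new family of independent random variables, still with no ties, that realizes the tuple in question. Throughout I use the convention that subscripts are read modulo $n$, together with the elementary fact that, since $P(U_i=U_j)=0$ for $i\ne j$, one has $P(U_j>U_i)=1-P(U_i>U_j)$; in particular $P(U_j>U_{j+1})=\ov{x_j}$ whenever $P(U_{j+1}>U_j)=x_j$.

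For part~(i), fix $i$ and set $V_j=U_{i+j-1}$ for $j=1,\dots,n$. Since this is merely a cyclic relabeling, the $V_j$ are independent with $P(V_j=V_k)=0$ for $j\ne k$, and $P(V_{j+1}>V_j)=P(U_{i+j}>U_{i+j-1})=x_{i+j-1}$ for every $j$; the wrap-around case $j=n$ gives $P(V_1>V_n)=P(U_i>U_{i-1})=x_{i-1}=x_{i+n-1}$, which is exactly the last coordinate of the cyclically shifted tuple. Hence $(x_i,x_{i+1},\dots,x_{i+n-1})$ is cyclic. For part~(iii), set $V_j=-U_j$; independence and the absence of ties are clearly preserved, and $P(V_{j+1}>V_j)=P(U_{j+1}<U_j)=1-P(U_{j+1}>U_j)=\ov{x_j}$, while $P(V_1>V_n)=P(U_1<U_n)=\ov{x_n}$, so $\ovxn$ is cyclic.

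For part~(ii), I would combine the two constructions and put $W_k=-U_{2-k}$ for $k=1,\dots,n$ (the subscripts $2-k$ run over a complete residue system mod $n$, so this is again a bijective relabeling composed with negation). Then $P(W_{k+1}>W_k)=P(U_{1-k}<U_{2-k})=P(U_{(1-k)+1}>U_{1-k})=x_{1-k}$, and since $x_{1-k}=x_{n+1-k}$ modulo $n$, the tuple realized by $W_1,\dots,W_n$ is precisely the reverse tuple $(x_n,x_{n-1},\dots,x_1)$, the wrap-around case $k=n$ being handled in the same way. Alternatively, one can note that the plain reversed relabeling $W_k=U_{2-k}$ realizes the \emph{complement} of the reverse tuple and then apply part~(iii).

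The argument is entirely elementary, and I do not anticipate any substantive obstacle. The only point requiring care is the bookkeeping of indices modulo $n$ — in particular verifying that the wrap-around probabilities $P(V_1>V_n)$ and $P(W_1>W_n)$ match the last coordinates of the permuted, complemented, and reversed tuples — together with the routine observation that $P(U_i=U_j)=0$ is exactly what licenses the replacement of $P(U_{j+1}<U_j)$ by $1-P(U_{j+1}>U_j)$.
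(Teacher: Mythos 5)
Your proof is correct and follows essentially the same route as the paper: part (i) by cyclic relabeling, part (iii) via $V_j=-U_j$, and part (ii) by composing negation with an index reversal (the paper uses $U_i^*=-U_{n+1-i}$, a cyclic shift of your $W_k=-U_{2-k}$, which is immaterial by part (i)). The index bookkeeping and the wrap-around checks in your argument are accurate.
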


\begin{proof}
Part (i) of the lemma follows
immediately from the definition \eqref{eq:cyclic-tuples}
of cyclic $n$-tuples.

For parts (ii) and (iii) suppose $\xn\in[0,1]^n$ is a
cyclic $n$-tuple with associated
        random variables $U_1,\dots,U_n$ satisfying \eqref{eq:cyclic-tuples}.
Setting $U_i^*=-U_{n+1-i}$ we have
\begin{align*}
		P(U_{i+1}^*> U_i^*)&=P(-U_{n-i}> -U_{n+1-i})
\\
		&=P(U_{n+1-i}> U_{n-i})=x_{n-i}
	\quad (i=0,1,\dots,n-1),
\end{align*}
which shows that the tuple $(x_n,x_{n-1},\dots,x_1)$ is cyclic.

Similarly, the fact that $\ovxn$ is cyclic follows by
        letting $U_i^*=-U_{i}$ and observing that
\begin{align*}
	P(U_{i+1}^*> U_i^*)&=P(-U_{i+1}> -U_{i})
	\\
	&=P(U_{i}> U_{i+1})=1-x_{i}
	\quad (i=1,\dots,n).
\end{align*}
This completes the proof of the lemma.
\end{proof}

\begin{lem}
        \label{lem:symmetry-triples}
If $\xyz\in [0,1]^3$ is a cyclic triple,
then so is any permutation of $\xyz$.
\end{lem}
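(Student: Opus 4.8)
The plan is to derive this as an immediate corollary of Lemma \ref{lem:symmetry-ntuples} specialized to $n=3$. The key point is purely group-theoretic: the symmetric group $S_3$ acting on the three coordinates is generated by the cyclic shifts together with any single reversal. Since Lemma \ref{lem:symmetry-ntuples}(i) and (ii) tell us that cyclicity is preserved under both of these operations, it will follow that cyclicity is preserved under every permutation of the coordinates.

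Concretely, starting from a cyclic triple $(x,y,z)$, I would first apply Lemma \ref{lem:symmetry-ntuples}(i) to obtain that the cyclic permutations $(y,z,x)$ and $(z,x,y)$ are cyclic; then apply Lemma \ref{lem:symmetry-ntuples}(ii) to obtain that the reversal $(z,y,x)$ is cyclic; and finally apply part (i) once more, now to the reversed triple $(z,y,x)$, to obtain that $(y,x,z)$ and $(x,z,y)$ are cyclic as well. The six tuples
\[
(x,y,z),\quad (y,z,x),\quad (z,x,y),\quad (z,y,x),\quad (y,x,z),\quad (x,z,y)
\]
exhaust all $3!=6$ permutations of $(x,y,z)$, which gives the claim.

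I do not anticipate any real obstacle here: the statement follows mechanically from the two symmetry properties already established in Lemma \ref{lem:symmetry-ntuples}. The only step meriting a second look is the verification that the list above really does enumerate all of $S_3$ rather than a proper subgroup; this is clear by direct inspection, and can also be seen from the fact that a reversal is an odd permutation whereas the cyclic shifts are even, so the subgroup they generate properly contains the alternating group $A_3$ and hence equals $S_3$.
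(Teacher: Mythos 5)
Your proposal is correct and follows essentially the same route as the paper: both derive the claim from Lemma \ref{lem:symmetry-ntuples} by first taking the cyclic permutations $(y,z,x)$ and $(z,x,y)$, then the reversal $(z,y,x)$, and then cyclic permutations of that reversal to obtain $(y,x,z)$ and $(x,z,y)$, exhausting all six permutations. The group-theoretic remark that cyclic shifts plus a reversal generate $S_3$ is a nice sanity check but adds nothing beyond the explicit enumeration already present in both arguments.
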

\begin{proof}
Suppose $\xyz\in[0,1]^3$ is cyclic. By part (i) of Lemma
        \ref{lem:symmetry-ntuples} the \emph{cyclic} permutations $(y,z,x)$ and
        $(z,x,y)$ are also cyclic. By part (ii) the reverse triple $(z,y,x)$
        is cyclic. Applying part (i) again to the triple $(z,y,x)$,
        we obtain that $(y,x,z)$ and $(x,z,y)$ are cyclic as well. Hence
all permutations of $\xyz$ are cyclic.
\end{proof}

The next result contains Trybula's characterization of cyclic triples
\cite{trybula1961}.   We state this characterization in the slightly
different---though equivalent---version given by Suck \cite{suck2002}.

\begin{lem}[Trybula {\cite[Theorem 1]{trybula1961}}; 
Suck {\cite[Theorems 2 and 3]{suck2002}}]
\label{lem:trybula-characterization}
	A triple $\xyz\in [0,1]^3$ is cyclic if and only if it
 satisfies the following inequalities:
 \begin{align}
\label{eq:cyclic-bound1}
	 \min\left(x+yz, y+zx, z+xy\right)\le 1,
\\
\label{eq:cyclic-bound2}
	 \min\left(\ovx+\ovy\;\ovz,
\ovy+\ovz\;\ovx,
	 \ovz+\ovx\;\ovy\right)\le 1.
\end{align}
\end{lem}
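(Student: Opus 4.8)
The plan is to prove the two directions separately, using the characterization of cyclic triples in terms of a feasibility question for three random variables. For the \textbf{necessity} direction, suppose $\xyz$ is cyclic, witnessed by independent random variables $U_1,U_2,U_3$ with $P(U_i=U_j)=0$ and $P(U_2>U_1)=x$, $P(U_3>U_2)=y$, $P(U_1>U_3)=z$. First I would observe that the three events $\{U_2>U_1\}$, $\{U_3>U_2\}$, $\{U_1>U_3\}$ cannot all fail simultaneously on a set of positive probability, since the complements $\{U_1>U_2\}$, $\{U_2>U_3\}$, $\{U_3>U_1\}$ together force the impossible chain $U_1>U_2>U_3>U_1$ (this uses $P(U_i=U_j)=0$ to rule out ties). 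Hence $P\big(\{U_2>U_1\}\cup\{U_3>U_2\}\cup\{U_1>U_3\}\big)=1$. The key step is then to bound this probability from above using inclusion–exclusion together with independence: by the same no-3-cycle argument, at most two of the three events can occur at once, so the triple-intersection term vanishes, giving $1=P(\cup)=x+y+z-P(A\cap B)-P(B\cap C)-P(C\cap A)$, where $A,B,C$ are the three events. Now the crucial inequality: $P(A\cap B)=P(U_2>U_1,\,U_3>U_2)\ge P(U_3>U_2)P(U_3>U_1)\ge yz$ — wait, that is not quite right; the clean bound is $P(U_1<U_2<U_3)\ge P(U_2>U_1)P(U_3>U_2)$ is \emph{false} in general, so instead I would use $P(A\cap B)\ge \max(x+y-1,0)$ type bounds, or, following Trybula, condition on $U_2$: writing $F_i$ for the CDF of $U_i$, one has $x=\int F_1\,dF_2$, etc., and then $P(U_1<U_2<U_3)=\int F_1(t)\,(1-F_3(t))\,dF_2(t)$, which one estimates against $x,y,z$ via a correlation/Chebyshev-type inequality. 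This will yield $\min(x+yz,\,y+zx,\,z+xy)\le 1$, and then \eqref{eq:cyclic-bound2} follows by applying \eqref{eq:cyclic-bound1} to the complementary tuple $\ovxyz$, which is cyclic by Lemma~\ref{lem:symmetry-ntuples}(iii).

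For the \textbf{sufficiency} direction, I would proceed constructively: given $\xyz\in[0,1]^3$ satisfying \eqref{eq:cyclic-bound1} and \eqref{eq:cyclic-bound2}, exhibit explicit random variables $U_1,U_2,U_3$ realizing these probabilities. The natural candidates are discrete random variables supported on a small common set of values (three or four atoms each, as in the Efron/Moon–Moser examples), with the atom locations and masses chosen as functions of $x,y,z$; the two inequalities are exactly the conditions guaranteeing that the resulting masses are nonnegative and sum to $1$. Alternatively, and perhaps more cleanly, one can use the symmetry reductions from Lemma~\ref{lem:symmetry-triples} to assume without loss of generality an ordering such as $x\le y\le z$, or to normalize which of the three expressions $x+yz$, $y+zx$, $z+xy$ achieves the minimum, thereby cutting down the number of cases. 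In the interest of brevity I would cite Trybula \cite{trybula1961} and Suck \cite{suck2002} for the full verification of sufficiency and reproduce only the construction, since this lemma is quoted from the literature rather than claimed as new.

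The \textbf{main obstacle} is the inequality $P(U_1<U_2<U_3)\ge x+y-1$ (and its cyclic analogues) in the necessity direction, or more precisely extracting from it the sharp bound $\min(x+yz,y+zx,z+xy)\le 1$ rather than a weaker linear bound like $x+y+z\le 2$. The correct argument requires the multiplicative terms $yz$, $zx$, $xy$, which come from a genuinely nonlinear estimate — one must compare $P(U_1<U_2<U_3)$ not just to $x$ and $y$ individually but to the product structure, typically by a clever conditioning on the middle variable $U_2$ and an application of the FKG/Chebyshev correlation inequality to the pair of monotone functions $F_1(t)$ (increasing) and $1-F_3(t)$ (decreasing) against the measure $dF_2$. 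Getting the three cyclic rotations of this estimate to combine into the single $\min(\cdots)\le 1$ statement, rather than three separate ones, is the delicate bookkeeping; since the result is due to Trybula and Suck, the cleanest exposition is to present their argument in the form we need and refer to the originals for details we omit.
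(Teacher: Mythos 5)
The paper does not prove this lemma at all: it is imported verbatim from Trybula \cite{trybula1961} and Suck \cite{suck2002}, with the citation serving as the proof. So your closing instinct---state the result and refer to the originals---is exactly the paper's treatment, and to that extent there is nothing to fault. The part of \eqref{eq:cyclic-bound2} you do argue, namely that it follows from \eqref{eq:cyclic-bound1} applied to the complementary triple via Lemma \ref{lem:symmetry-ntuples}(iii), is correct.

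However, insofar as your text is offered as a proof sketch, the necessity direction has a genuine gap that the hedging does not close. Your inclusion--exclusion identity is fine: with $A=\{U_1<U_2\}$, $B=\{U_2<U_3\}$, $C=\{U_3<U_1\}$, both the triple intersection and the triple intersection of complements are null, so $1=x+y+z-P(A\cap B)-P(B\cap C)-P(C\cap A)$. But the Chebyshev/FKG conditioning on the middle variable that you invoke goes the wrong way for your purposes: since $F_1$ is nondecreasing and $1-F_3$ is nonincreasing, one gets $P(U_1<U_2<U_3)=\int F_1(t)\,(1-F_3(t))\,dF_2(t)\le xy$, an \emph{upper} bound, and feeding the three such bounds into the identity yields only $x+y+z-xy-yz-zx\le 1$. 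This is strictly weaker than \eqref{eq:cyclic-bound1}: the triple $(0.7,0.7,0.7)$ satisfies it while violating \eqref{eq:cyclic-bound1} (indeed it is not cyclic, since $\pi_3=(\sqrt5-1)/2<0.7$). The bound $P(A\cap B)\ge\max(x+y-1,0)$ is likewise only linear and cannot produce the product terms with the needed sign. So no argument you outline actually reaches $\min(x+yz,\,y+zx,\,z+xy)\le 1$; the ``delicate bookkeeping'' you defer is the entire content of Trybula's theorem. Similarly, the sufficiency direction promises an explicit atomic construction whose nonnegativity constraints ``are exactly'' the two inequalities, but no atoms or masses are written down, and producing them (with the attendant case analysis over which expression attains the minimum) is again the substance of the cited results. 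Either present those arguments in full or, as the paper does, cite \cite{trybula1961} and \cite{suck2002} and claim nothing more.
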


The conditions \eqref{eq:cyclic-bound1} and \eqref{eq:cyclic-bound2}
in this characterization are rather unwieldy to work with directly.
However, by imposing additional constraints on the variables $x$, $y$,
$z$, the conditions simplify significantly as the next lemma shows.

\begin{lem}
	\label{lem:cyclic-characterizations}
	Let $\xyz\in[0,1]^3$.
	\begin{itemize}
		\item[(i)] 
			If $x\le y\le z$, then the triple
			$\xyz$ is cyclic if and only if 
			it satisfies the following two inequalities:
			\begin{align}
			\label{eq:cyclic-bound-simplified1}
				x+yz&\le 1,
				\\
			\label{eq:cyclic-bound-simplified2}
				\ovz+\ovx\, \ovy&\le 1.
			\end{align}
		\item[(ii)] 
			If $x=\min(x,y,z)$ and $y,z\ge 1/2$,
			then $\xyz$ is cyclic
			if and only if \eqref{eq:cyclic-bound-simplified1}
			holds.
	\end{itemize}
\end{lem}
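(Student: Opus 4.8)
The plan is to derive both parts from Trybula's characterization (Lemma~\ref{lem:trybula-characterization}) by showing that, under the stated ordering or sign assumptions, most of the six defining inequalities in \eqref{eq:cyclic-bound1}--\eqref{eq:cyclic-bound2} become automatic, so that only one inequality from each of the two $\min$'s can possibly fail. For part~(i), assume $x\le y\le z$. In the first $\min$, I want to argue that $x+yz$ is the smallest of the three expressions $x+yz$, $y+zx$, $z+xy$; then \eqref{eq:cyclic-bound1} is equivalent to $x+yz\le 1$, which is \eqref{eq:cyclic-bound-simplified1}. To see $x+yz\le y+zx$, rewrite the difference as $y+zx-x-yz=(y-x)-z(y-x)=(y-x)(1-z)\ge0$, using $y\ge x$ and $z\le1$; similarly $z+xy-x-yz=(z-x)-y(z-x)=(z-x)(1-y)\ge0$. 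Hence $x+yz$ is the minimum and \eqref{eq:cyclic-bound1} reduces to \eqref{eq:cyclic-bound-simplified1}. For the second $\min$, note that $x\le y\le z$ is equivalent to $\ovx\ge\ovy\ge\ovz$, i.e.\ the complementary triple $(\ovx,\ovy,\ovz)$ has its coordinates in the reverse order with smallest entry $\ovz$. Applying the same algebraic identity to $\ovz,\ovy,\ovx$ (now $\ovz$ playing the role of the minimum) shows $\ovz+\ovx\,\ovy$ is the smallest of the three expressions in \eqref{eq:cyclic-bound2}, so \eqref{eq:cyclic-bound2} reduces to $\ovz+\ovx\,\ovy\le1$, which is \eqref{eq:cyclic-bound-simplified2}. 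Combining the two reductions gives part~(i).

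For part~(ii), assume $x=\min(x,y,z)$ and $y,z\ge1/2$. By Lemma~\ref{lem:symmetry-triples} we may permute freely, so without loss of generality we may also assume $y\le z$, i.e.\ $x\le y\le z$, and then part~(i) applies: the triple is cyclic iff both \eqref{eq:cyclic-bound-simplified1} and \eqref{eq:cyclic-bound-simplified2} hold. The content of part~(ii) is therefore that, under the extra hypotheses $y,z\ge1/2$, inequality \eqref{eq:cyclic-bound-simplified2} is automatically satisfied, so it may be dropped. To check this, observe that $y,z\ge1/2$ gives $\ovy,\ovz\le1/2$, hence $\ovx\,\ovy\le\ovy\le1/2$ (since $\ovx\le1$) and $\ovz\le1/2$, so $\ovz+\ovx\,\ovy\le1/2+1/2=1$. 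Thus \eqref{eq:cyclic-bound-simplified2} holds trivially, and cyclicity is equivalent to \eqref{eq:cyclic-bound-simplified1} alone. One should also verify that \eqref{eq:cyclic-bound-simplified1} itself is permutation-invariant enough that ``$x+yz\le1$ with $x$ the minimum'' is well-defined; since part~(i) was stated for the sorted triple and we only used $x\le y\le z$, and since $x$ is genuinely the minimum, the expression $x+yz$ in \eqref{eq:cyclic-bound-simplified1} refers unambiguously to (minimum) $+$ (product of the other two), which is symmetric in the remaining two coordinates.

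I expect the main obstacle to be purely bookkeeping rather than mathematical: making sure the reduction of the second $\min$ in part~(i) is stated cleanly. The cleanest route is probably to prove the following one-line sub-claim once and reuse it: \emph{if $a\le b\le c$ in $[0,1]$, then $\min(a+bc,\ b+ca,\ c+ab)=a+bc$}, via the identities $(b+ca)-(a+bc)=(b-a)(1-c)\ge0$ and $(c+ab)-(a+bc)=(c-a)(1-b)\ge0$. Then apply it to $(x,y,z)$ for \eqref{eq:cyclic-bound1} and to $(\ovz,\ovy,\ovx)$ for \eqref{eq:cyclic-bound2}. With that sub-claim in hand, part~(i) is immediate and part~(ii) follows from part~(i) plus the elementary estimate $\ovz+\ovx\,\ovy\le1$ under $y,z\ge1/2$, together with an appeal to Lemma~\ref{lem:symmetry-triples} to reduce to the sorted case. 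No delicate analysis is needed anywhere.
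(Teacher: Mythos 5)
Your proof is correct and follows essentially the same route as the paper: reduce to Trybula's characterization, show that under $x\le y\le z$ the minima in \eqref{eq:cyclic-bound1} and \eqref{eq:cyclic-bound2} are attained at $x+yz$ and $\ovz+\ovx\,\ovy$ respectively, and for part~(ii) observe that $y,z\ge 1/2$ makes \eqref{eq:cyclic-bound-simplified2} automatic while Lemma~\ref{lem:symmetry-triples} handles the case $z<y$. Your attention to the symmetry of $x+yz$ in $y$ and $z$ when invoking the swap matches the paper's treatment of that case.
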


\begin{proof}
	We need to show that the conditions \eqref{eq:cyclic-bound1} and
	\eqref{eq:cyclic-bound2} reduce to  
		\eqref{eq:cyclic-bound-simplified1}
			and \eqref{eq:cyclic-bound-simplified2} 
	under the assumptions of part (i), and to 
	\eqref{eq:cyclic-bound-simplified1}
	under the assumptions of part (ii).

	Suppose first that $\xyz$ satisfies the conditions of part (i), 
	i.e., that $x\le y\le z$. Then 
$x(1-y)\le z(1-y)$ and $x(1-z)\le y(1-z)$, and therefore
	$x+yz\le z+xy$ and $x+yz\le y+zx$. Thus, the minimum 
	on the left of the inequality
	\eqref{eq:cyclic-bound1} is equal to $x+yz$,
	and the inequality therefore holds if and only if $x+yz\le 1$, 
	Similarly, noting that $x\le y\le z$ is equivalent to 
	$\ovz\le \ovy\le \ovx$, we see that
	condition \eqref{eq:cyclic-bound2} is 
	equivalent to $\ovz + \ovx\, \ovy\le 1$.
	This proves part (i) of the lemma.

	Next, suppose that $y,z\ge 1/2$ and $x=\min(x,y,z)$. Then 
	either $z=\max(x,y,z)$ or $y=\max(x,y,z)$.
	In the first case we have 
	$x\le y\le z$, so part (i) of the lemma  applies and
	shows that the triple $\xyz$ is cyclic if and only if it satisfies
		\eqref{eq:cyclic-bound-simplified1} and
		\eqref{eq:cyclic-bound-simplified2}.
		But since $y,z\ge 1/2$,
		we have $\ovy,\ovz\le 1/2$ and
		therefore $\ovz + \ovx\, \ovy\le (1/2)+ 1\cdot (1/2)=1$.
	Hence condition \eqref{eq:cyclic-bound-simplified2} holds
	trivially, so $\xyz$  is cyclic if and only if
	\eqref{eq:cyclic-bound-simplified1} holds.

    In the case where we have $x \le z \le y$, we note that, by Lemma \ref{lem:symmetry-triples}, $(x,y,z)$ is cyclic if and only if $(x,z,y)$ is cyclic. Applying the above argument to $(x,z,y)$ then yields the same conclusion. This completes the proof of part (ii).
\end{proof}

\section{Proof of Theorem~\ref{thm:main1}}
\label{sec:proof-theorem1}

By \eqref{eq:pn-definition} we have $p_3=\vol(\Cthree)$ and
$p_3^*=\vol(\NTthree)$, so  computing these probabilities amounts
to computing the volumes of the regions $\Cthree$ and $\NTthree$ of
cyclic and nontransitive triples.

We begin by using the symmetry properties established in Lemmas
\ref{lem:symmetry-ntuples} and \ref{lem:symmetry-triples} to reduce 
this computation to one involving simpler regions.  Let
\begin{align}
\label{eq:def-CthreeI}
        \CthreeI &=\{(x,y,z)\in \Cthree: 1/2<x\le 1\text{ and }x\le y,z \le 1\},
\\
\label{eq:def-CthreeII}
        \CthreeII &=\{(x,y,z)\in \Cthree: 0\le x<1/2\text{ and }
        1/2< y,z \le 1\}.
\end{align}

\begin{lem}
\label{lem:volume3-splitup}
We have
\begin{align}
	\label{eq:volumeI-splitup}
        \vol(\NTthree) &= 3\vol(\CthreeI),
\\
	\label{eq:volume-splitup}
        \vol(\Cthree) &= 6\vol(\CthreeI) + 6\vol(\CthreeII).
\end{align}
\end{lem}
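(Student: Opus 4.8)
The plan is to slice the region $\Cthree$ according to how many of the three coordinates exceed $1/2$, and then to use the permutation symmetry of cyclic triples (Lemma~\ref{lem:symmetry-triples}) together with the complementation symmetry (Lemma~\ref{lem:symmetry-ntuples}(iii)) to identify the volume of each slice with a multiple of $\vol(\CthreeI)$ or $\vol(\CthreeII)$. Throughout I would freely discard the sets $\{x=y\}\cup\{y=z\}\cup\{z=x\}$ and $\{x=1/2\}\cup\{y=1/2\}\cup\{z=1/2\}$, which have Lebesgue measure zero in $[0,1]^3$. The two symmetry operations in play — permuting coordinates and applying $\xyz\mapsto\ovxyz$ — are affine maps whose Jacobian determinant has absolute value $1$, hence are volume-preserving, and by Lemmas~\ref{lem:symmetry-triples} and~\ref{lem:symmetry-ntuples}(iii) each maps $\Cthree$ bijectively onto itself.

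First I would establish \eqref{eq:volumeI-splitup}. A nontransitive triple has all coordinates $>1/2$, so its smallest coordinate exceeds $1/2$; conversely, a cyclic triple whose smallest coordinate exceeds $1/2$ has all coordinates $>1/2$ and is therefore nontransitive. Splitting $\NTthree$ (up to measure zero) according to which of $x,y,z$ is smallest yields three pieces, and the piece on which $x$ is smallest is exactly $\CthreeI$, since $x=\min(x,y,z)>1/2$ forces $x\le y,z\le 1$. The transpositions $x\leftrightarrow y$ and $x\leftrightarrow z$ carry this piece onto the other two, so all three have volume $\vol(\CthreeI)$, giving $\vol(\NTthree)=3\vol(\CthreeI)$.

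For \eqref{eq:volume-splitup}, let $V_k$ ($k=0,1,2,3$) denote the volume of the set of cyclic triples having exactly $k$ coordinates greater than $1/2$, so that $\vol(\Cthree)=V_0+V_1+V_2+V_3$. The map $\xyz\mapsto\ovxyz$ turns a coordinate that is $>1/2$ into one that is $<1/2$ and vice versa, hence bijects the set counted by $V_k$ with the one counted by $V_{3-k}$; being volume-preserving, it yields $V_0=V_3$ and $V_1=V_2$. On the other hand $V_3=\vol(\NTthree)$ by definition of nontransitivity, while $V_2$ counts cyclic triples with exactly one coordinate below $1/2$: splitting according to which coordinate that is, and applying the appropriate transposition, identifies each of the three pieces with $\CthreeII$ (in $\CthreeII$ the coordinate $x$ is the one lying below $1/2$, with $y,z\in(1/2,1]$), so $V_2=3\vol(\CthreeII)$. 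Combining, $\vol(\Cthree)=V_0+V_1+V_2+V_3=2V_2+2V_3=6\vol(\CthreeII)+2\vol(\NTthree)=6\vol(\CthreeII)+6\vol(\CthreeI)$ by the first part.

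The argument is essentially bookkeeping once the two symmetry operations are known to preserve both volume and the region $\Cthree$; the only point requiring care is the consistent handling of the measure-zero coincidence sets and the verification that each slice really is a coordinate-permuted (or complemented) copy of $\CthreeI$ or $\CthreeII$, so I do not anticipate a genuine obstacle.
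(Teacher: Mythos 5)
Your proof is correct and follows essentially the same route as the paper: both arguments decompose $\Cthree$ according to the positions of the coordinates relative to $1/2$ and invoke the permutation symmetry (Lemma~\ref{lem:symmetry-triples}) and the complementation symmetry (Lemma~\ref{lem:symmetry-ntuples}(iii)) to identify each piece with a copy of $\CthreeI$ or $\CthreeII$. Your grouping of the paper's eight signature classes by the number of coordinates exceeding $1/2$ (with the $V_2$ class then split by which coordinate is small) is only a cosmetic repackaging of the paper's bookkeeping, and your handling of the measure-zero boundary sets is fine.
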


\begin{proof}
By definition, the set $\NTthree$ of nontransitive triples consists of those
cyclic triples $\xyz$ in which all coordinates are $>1/2$.  Since, by Lemma
\ref{lem:symmetry-triples}, the ``cyclic triple'' 
property is invariant with respect
to taking permutations, the volume of this set is three times the volume of
the set of those triples $\xyz$ in $\NTthree$ which satisfy $x=\min\xyz$, i.e., the set $\CthreeI$.  This proves \eqref{eq:volumeI-splitup}.

To prove \eqref{eq:volume-splitup}, note first that we may ignore 
triples $\xyz$ in which one of the coordinates is equal to $1/2$ as these do
not contribute to the volume.  We classify the remaining triples $\xyz$
	in $\Cthree$ into $8$ mutually disjoint classes, according to their
	\emph{signature} $\sigma$, defined as 
\begin{equation*}
	\sigma
	=(\sgn(x-1/2),\sgn(y-1/2),\sgn(z-1/2)),
\end{equation*}
where $\sgn(t)=1$ if $t>0$  and $\sgn(t)=-1$ if $t<0$.
For example, a triple $\xyz$ with $x<1/2$, $y>1/2$, $z>1/2$ has signature
$(-1,1,1)$.

	Letting $\Cthree^{\,\sigma}$ denote the set of cyclic triples $\xyz$
with signature $\sigma$, we then have
	\begin{equation}
		\label{eq:volume-splitup-proof1}
		\vol(\Cthree) = \sum_{\sigma=(\pm1,\pm1,\pm1)}
		\vol(\Cthree^{\,\sigma}),
	\end{equation}
where the sum is over all $8$ possible values of the signature $\sigma$.
Now note that the cyclic 
triples with signature $(1,1,1)$ are exactly the nontransitive triples, 
and the cyclic triples
with signature $(-1,1,1)$ are exactly those counted in the set $\CthreeII$. Thus we have
\begin{align}
		\label{eq:volume-splitup-proof2a}
		\vol(\Cthree^{(1,1,1)})&=\vol(\NTthree)=3\vol(\CthreeI),
\\
		\label{eq:volume-splitup-proof2b}
		\vol(\Cthree^{(-1,1,1)})&=\vol(\CthreeII).
\end{align}

Next, observe that if $\xyz$ has signature $\sigma$, then 
the complementary triple $\ovxyz=(1-x,1-y,1-z)$ has
signature $-\sigma$. Since, by Lemma \ref{lem:symmetry-ntuples}(iii), a
triple $\xyz$ is cyclic if and only if $\ovxyz$ is cyclic, it follows that
\begin{equation} 
\label{eq:volume-splitup-proof3}
\vol(\Cthree^{(-1,-1,-1)})= \vol(\Cthree^{(1,1,1)})= \vol(\NTthree).
\end{equation}
Finally, using Lemma \ref{lem:symmetry-ntuples}(iii) along with Lemma
\ref{lem:symmetry-triples}, we see that
\begin{align}
\label{eq:volume-splitup-proof4}
	\vol(\Cthree^{(-1,-1,1)})&= 
	\vol(\Cthree^{(1,-1,-1)})=
\vol(\Cthree^{(-1,1,-1)})
\\
\notag
	&=\vol(\Cthree^{(1,1,-1)})
	=\vol(\Cthree^{(1,-1,1)})
	=\vol(\Cthree^{(-1,1,1)})
	\\
	\notag
	&=\vol(\CthreeII).
\end{align}
	Combining \eqref{eq:volume-splitup-proof1}--\eqref{eq:volume-splitup-proof4} yields \eqref{eq:volume-splitup}.
\end{proof}

Let 
\begin{equation}
	\label{eq:omega-def}
	\omega=\frac{\sqrt{5}-1}{2}=0.618034\dots
\end{equation}
and note that $\omega$ is the positive root of the quadratic equation
\begin{equation}
	\label{eq:omega-relation}
	\omega^2+\omega=1.
\end{equation}
We remark that $\omega$ is also equal to the number $\pi_3$
defined in \eqref{eq:pi-n-def} and \eqref{eq:pi-n-initial-values}, i.e.,
$\omega$ is the largest number for which there exists a cyclic
triple $\xyz$ with $\min\xyz\ge \omega$. We will, however, not use 
this fact in our proof. 

\begin{lem}
	\label{lem:CthreeI-II-characterization}
\mbox{}
\begin{itemize}
\item[(i)]
A triple $\xyz$ belongs to the set $\CthreeI$ if and only if it satisfies
\begin{equation}
\label{eq:CthreeI-characterization}
\left\{
\begin{aligned}
	&\frac12<x\le \omega
	\\
	&x\le y\le \frac{1-x}{x}
\\
	&x\le z\le \frac{1-x}{y}
\end{aligned}
\right\}.
\end{equation}
\item[(ii)]
A triple $\xyz$ belongs to the set $\CthreeII$ if and only if it satisfies
\begin{equation}
\label{eq:CthreeII-characterization}
\left\{
\begin{aligned}
		&0\le x<\frac12
		\\
		&\frac12< y\le 1-x
		\\
		&\frac12< z\le 1
	\end{aligned}
\right\}
\quad\text{or}\quad
\left\{
\begin{aligned}
		&0\le x<\frac12
		\\
		&1-x<y\le 1
		\\
		&\frac12<z\le \frac{1-x}{y}
\end{aligned}
\right\}.
\end{equation}
\end{itemize}
\end{lem}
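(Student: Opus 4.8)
The plan is to apply Lemma \ref{lem:cyclic-characterizations}, which gives a simplified membership test once the variables are suitably ordered, and then to translate the resulting inequalities into explicit bounds on $x$, $y$, $z$.

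For part (i), a triple $\xyz$ lies in $\CthreeI$ precisely when it is cyclic, $1/2 < x \le 1$, and $x \le y, z \le 1$. In particular $x = \min\xyz$ and $y, z \ge x > 1/2$, so the hypotheses of Lemma \ref{lem:cyclic-characterizations}(ii) are met; hence cyclicity is equivalent to the single inequality $x + yz \le 1$, i.e. $yz \le 1 - x$. Since $x > 1/2$ forces $1 - x < 1/2 < x \le y$, this already rules out $y = 1$ unless... more precisely, given $x \le y$, the condition $z \le (1-x)/y$ together with $z \ge x$ requires $x \le (1-x)/y$, i.e. $y \le (1-x)/x$. Combining $x \le y \le (1-x)/x$ with $x \le z \le (1-x)/y$ gives exactly the system \eqref{eq:CthreeI-characterization}, provided one also checks that these constraints are consistent only when $x \le (1-x)/x$, i.e. $x^2 + x \le 1$, i.e. $x \le \omega$ by \eqref{eq:omega-relation}. (When $x > \omega$ the interval for $y$ is empty.) One should also verify the upper bounds $y \le 1$ and $z \le 1$ are automatically implied: $(1-x)/x < 1$ since $x > 1/2$, and $(1-x)/y \le (1-x)/x < 1$, so the "$\le 1$" constraints in the definition of $\CthreeI$ are redundant. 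This yields (i).

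For part (ii), a triple lies in $\CthreeII$ iff it is cyclic, $0 \le x < 1/2$, and $1/2 < y, z \le 1$. Here $x = \min\xyz$ and $y, z \ge 1/2$, so again Lemma \ref{lem:cyclic-characterizations}(ii) applies and cyclicity is equivalent to $x + yz \le 1$, i.e. $yz \le 1 - x$. Now I would split on the size of $y$ relative to $1-x$. If $1/2 < y \le 1 - x$, then $yz \le (1-x) \cdot 1 = 1-x$ automatically (using $z \le 1$), so no further constraint on $z$ beyond $1/2 < z \le 1$; this is the first alternative in \eqref{eq:CthreeII-characterization}. If $1 - x < y \le 1$, then $yz \le 1-x$ is equivalent to $z \le (1-x)/y$, and since $(1-x)/y < 1$ in this range the effective constraint is $1/2 < z \le (1-x)/y$ (note $(1-x)/y > 1/2$ is possible and is exactly what makes the range nonempty); this is the second alternative. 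One should confirm the case $y \le 1/2$ does not arise (excluded by the definition of $\CthreeII$) and that the two alternatives are disjoint and exhaust the possibilities for $y \in (1/2, 1]$.

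The main obstacle is not conceptual but bookkeeping: one must be careful that every inequality appearing in the definitions of $\CthreeI$, $\CthreeII$ is accounted for — in particular verifying that the "redundant" upper bounds ($y, z \le 1$ in (i); the interplay of $z \le 1$ with $z \le (1-x)/y$ in (ii)) are genuinely implied by the retained ones, and that the boundary value $x = \omega$ is handled consistently with the strict/non-strict inequalities in the stated systems. Once these edge cases are checked, both equivalences follow directly from Lemma \ref{lem:cyclic-characterizations}(ii).
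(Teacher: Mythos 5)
Your proposal is correct and follows essentially the same route as the paper: both apply Lemma \ref{lem:cyclic-characterizations}(ii) to reduce membership in $\CthreeI$ and $\CthreeII$ to the single inequality $x+yz\le 1$, derive $x\le\omega$ from the nonemptiness of the $y$-range in part (i), and split on $y\le 1-x$ versus $y>1-x$ in part (ii). Your parenthetical ``$(1-x)/y>1/2$ is possible'' could be sharpened to the unconditional fact that $x<1/2$ and $y\le 1$ force $(1-x)/y>1/2$, but this does not affect the validity of the equivalence.
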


\begin{proof}
By definition (see \eqref{eq:def-CthreeI} and \eqref{eq:def-CthreeII})
the sets $\CthreeI$ and $\CthreeII$
consist of those cyclic
triples $\xyz\in[0,1]^3$
that satisfy $1/2< x\le y,z\le 1$ and $0\le x<1/2< y,z\le 1$,
respectively.    By part (ii) of Lemma
\ref{lem:cyclic-characterizations}, under either of the latter two
conditions, a triple $\xyz$ is cyclic if and only if it satisfies
$x+yz\le 1$. 

Next, note that any triple $\xyz\in \CthreeI$ must satisfy   $x\le y$ and
$x\le z$, 
so the inequality $x+yz\le 1$ can only hold if $x+x^2\le 1$, i.e., if 
$x\le \omega$, where $\omega$ is defined by
 \eqref{eq:omega-def} and \eqref{eq:omega-relation}.
For each
$x$ in the range $1/2<x\le \omega$, the set of pairs $(y,z)$ with $x\le
y,z\le 1$ satisfying $x+yz\le 1$ is nonempty and consists of exactly
those pairs that satisfy $x\le y\le (1-x)/x$ and 
$x\le z\le (1-x)/y$.  It follows
that a triple $\xyz$ belongs to $\CthreeI$ if and only if it satisfies
\eqref{eq:CthreeI-characterization}.  This proves part (i) of the lemma.

For the proof of part (ii), note that $\CthreeII$ is the set of triples
$\xyz$ satisfying $0\le x<1/2$, $1/2<y,z\le 1$, and $x+yz\le 1$.
For any fixed pair $(x,y)$ with $0\le x<1/2$ and $1/2<y\le 1$,
the set of values $z$ with $1/2<z\le 1$ for which $x+yz\le 1$ holds
is exactly the interval $(1/2, \min((1-x)/y,1)]$. 
The latter interval is nonempty, and it 
reduces to $(1/2,1]$ if $y\le 1-x$, and to 
$(1/2,(1-x)/y]$ if $1-x<y\le 1$.  The desired 
characterization \eqref{eq:CthreeII-characterization} now follows.
\end{proof}

\begin{lem}
\label{lem:vol-CthreeII}
We have
\begin{equation}
\label{eq:vol-CthreeII}
\vol(\CthreeII)= \frac 3 {16} - \frac {\ln 2} 8.
\end{equation}
\end{lem}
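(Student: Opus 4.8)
The plan is to compute $\vol(\CthreeII)$ directly by integrating the characterization \eqref{eq:CthreeII-characterization} from Lemma \ref{lem:CthreeI-II-characterization}(ii). That characterization splits $\CthreeII$ into two disjoint pieces according to whether $y\le 1-x$ or $1-x<y\le 1$, so I would write $\vol(\CthreeII)=V_1+V_2$ where $V_1$ is the volume of the first region and $V_2$ that of the second. For $V_1$, the conditions $0\le x<1/2$, $1/2<y\le 1-x$, $1/2<z\le 1$ describe a region that factors nicely: for fixed $x$ the $y$-range has length $(1-x)-1/2=1/2-x$ and the $z$-range has length $1/2$, so $V_1=\int_0^{1/2}\tfrac12\left(\tfrac12-x\right)\,dx$, an elementary integral.

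For $V_2$, the conditions are $0\le x<1/2$, $1-x<y\le 1$, $1/2<z\le (1-x)/y$. Here I would integrate in the order $z$, then $y$, then $x$: the inner $z$-integral contributes the length $(1-x)/y-1/2$ of the $z$-interval (which is positive precisely because $y\le 1\le 2(1-x)$ fails to hold—wait, one should check $(1-x)/y>1/2$, i.e. $y<2(1-x)$, which holds since $y\le 1$ and $x<1/2$ gives $2(1-x)>1$), so
\[
V_2=\int_0^{1/2}\int_{1-x}^{1}\left(\frac{1-x}{y}-\frac12\right)\,dy\,dx.
\]
The inner integral over $y$ produces a logarithm: $\int_{1-x}^1 (1-x)/y\,dy=(1-x)\bigl(\ln 1-\ln(1-x)\bigr)=-(1-x)\ln(1-x)$, together with a polynomial term $-\tfrac12\bigl(1-(1-x)\bigr)=-x/2$. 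Then the outer integral $\int_0^{1/2}\bigl(-(1-x)\ln(1-x)-x/2\bigr)\,dx$ is handled by the substitution $u=1-x$ for the logarithmic part, using $\int u\ln u\,du=\tfrac{u^2}{2}\ln u-\tfrac{u^2}{4}$, evaluated between $u=1/2$ and $u=1$; the remaining piece is elementary.

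The main obstacle is purely bookkeeping: carefully tracking the several boundary terms and the $\ln 2$ contributions from evaluating $u^2\ln u$ at $u=1/2$, and combining the rational constants from $V_1$, the polynomial part of $V_2$, and the non-logarithmic part of $\int u\ln u\,du$ so that everything collapses to $3/16-(\ln 2)/8$. I would keep the logarithmic and rational contributions separate throughout and only combine them at the very end as a consistency check against the stated value.
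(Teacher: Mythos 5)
Your proposal is correct and follows essentially the same route as the paper: it integrates over the two pieces of the characterization \eqref{eq:CthreeII-characterization}, with the first piece contributing $\tfrac12\int_0^{1/2}(\tfrac12-x)\,dx$ and the second reducing to $\int_0^{1/2}\bigl(-(1-x)\ln(1-x)-x/2\bigr)\,dx$, exactly as in the paper's computation. The only cosmetic difference is that you evaluate the logarithmic integral via the substitution $u=1-x$ while the paper quotes an antiderivative of $(1-x)\ln(1-x)$ directly; both yield $\tfrac3{16}-\tfrac{\ln 2}{8}$.
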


\begin{proof}
Using the characterization 
\eqref{eq:CthreeII-characterization} of the set $\CthreeII$ we obtain
    \begin{align*}
        \vol(\CthreeII) 
&= \int_{0}^{1/2} \int_{1/2}^{1-x} \int_{1/2}^{1} \,
dz\, dy\, dx 
+ \int_{0}^{1/2} \int_{1-x}^1 \int_{1/2}^{\frac {1-x} y}\,
dz\,dy\,dx
\\
       &= \frac12 \int_0^{1/2} \left(\frac12- x\right) \, dx
       + \int_{0}^{1/2} \int_{1-x}^1 \left(\frac {1-x} y 
       - \frac12\right) \, dy\, dx
\\      
       &= \frac12 \int_0^{1/2} \left(\frac12- 2x\right) \, dx
	 - \int_0^{1/2}(1-x) \ln (1-x)\, dx 
\\
	 &=- \int_0^{1/2}(1-x) \ln (1-x)\, dx 
\\
        &= - \left[ \frac {(1-x)^2}{4} (1-2\ln(1-x))  
	\right]_0^{1/2}
        =  \frac 3 {16} - \frac {\ln 2} 8.
\qedhere
\end{align*}
\end{proof}

\begin{lem}
\label{lem:vol-CthreeI}
We have
\begin{equation}
\label{eq:vol-CthreeI}
\vol(\CthreeI)= 
\frac{\ln 2}{8} -\ln (2\omega)+\frac{11\omega}{12}-\frac{7}{16},
\end{equation}
where $\omega=(\sqrt{5}-1)/2$ is defined as in \eqref{eq:omega-def}.
\end{lem}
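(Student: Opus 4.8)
The plan is to compute $\vol(\CthreeI)$ directly from the explicit description of the region given in Lemma~\ref{lem:CthreeI-II-characterization}(i), namely
\[
\vol(\CthreeI)=\int_{1/2}^{\omega}\int_{x}^{(1-x)/x}\int_{x}^{(1-x)/y}\,dz\,dy\,dx .
\]
First I would carry out the innermost integration over $z$, which yields a factor $\bigl((1-x)/y - x\bigr)$, provided the upper limit $(1-x)/y$ exceeds the lower limit $x$; this is exactly the condition $y\le(1-x)/x$ that already bounds the $y$-range, so no case distinction is needed inside the $x$-range $1/2<x\le\omega$. After the $z$-integration the problem reduces to a two-dimensional integral
\[
\vol(\CthreeI)=\int_{1/2}^{\omega}\int_{x}^{(1-x)/x}\left(\frac{1-x}{y}-x\right)dy\,dx .
\]
The $y$-integration then produces a logarithmic term and a polynomial term: $\int_x^{(1-x)/x}\frac{dy}{y}=\ln\frac{1-x}{x^2}$ and $\int_x^{(1-x)/x}x\,dy = x\bigl((1-x)/x - x\bigr) = (1-x)-x^2$. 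So after two integrations we are left with the single integral
\[
\vol(\CthreeI)=\int_{1/2}^{\omega}\Bigl[(1-x)\ln\tfrac{1-x}{x^2} - (1-x) + x^2\Bigr]dx
=\int_{1/2}^{\omega}\Bigl[(1-x)\ln(1-x) - 2(1-x)\ln x - (1-x) + x^2\Bigr]dx .
\]

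Next I would evaluate each of the four resulting one-dimensional integrals over $[1/2,\omega]$. The polynomial pieces $-(1-x)$ and $x^2$ are routine. For $\int(1-x)\ln(1-x)\,dx$ one uses the antiderivative $-\tfrac{(1-x)^2}{4}\bigl(1-2\ln(1-x)\bigr)$, exactly as in the proof of Lemma~\ref{lem:vol-CthreeII}. The term $\int(1-x)\ln x\,dx$ is handled by integration by parts: $\int(1-x)\ln x\,dx = (x-\tfrac{x^2}{2})\ln x - \int(1 - \tfrac{x}{2})\,dx = (x-\tfrac{x^2}{2})\ln x - x + \tfrac{x^2}{4}$. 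Evaluating all antiderivatives at the endpoints $x=\omega$ and $x=1/2$ gives an expression involving $\ln\omega$, $\ln(1-\omega)$, $\ln(1/2)$, and powers of $\omega$.

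The main obstacle — really just bookkeeping rather than a genuine difficulty — is the final simplification step, where I would use the defining relation $\omega^2+\omega=1$ from \eqref{eq:omega-relation} repeatedly to collapse all polynomial-in-$\omega$ contributions into a single multiple of $\omega$ plus a rational constant, and to rewrite the logarithmic contributions. In particular, $1-\omega = \omega^2$, so $\ln(1-\omega) = 2\ln\omega$, which lets me combine the $\ln(1-x)$ and $\ln x$ logarithmic terms evaluated at $\omega$; and $\ln(1-\tfrac12)=\ln\tfrac12 = -\ln 2$ accounts for the $\tfrac{\ln 2}{8}$ term, while the terms $\ln\omega$ at the endpoint $x=1/2$ combine with the $\omega$-endpoint logarithms to produce $-\ln(2\omega)$. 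Higher powers of $\omega$ ($\omega^3$, $\omega^4$) that appear from the $(1-x)^2$ and $x^2$ factors at $x=\omega$ must be reduced via $\omega^2=1-\omega$, e.g. $\omega^3=\omega-\omega^2=2\omega-1$ and $\omega^4=(1-\omega)^2=1-2\omega+\omega^2 = 2-3\omega$. Collecting everything carefully should yield precisely $\frac{\ln 2}{8}-\ln(2\omega)+\frac{11\omega}{12}-\frac{7}{16}$, as claimed in \eqref{eq:vol-CthreeI}.
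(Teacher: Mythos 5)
Your proposal is correct and follows essentially the same route as the paper: both start from the characterization in Lemma~\ref{lem:CthreeI-II-characterization}(i), evaluate the same iterated integral $\int_{1/2}^{\omega}\int_x^{(1-x)/x}\int_x^{(1-x)/y}dz\,dy\,dx$ in the same order, and reduce to the one-variable integral of $(1-x)\ln\frac{1-x}{x^2}-(1-x)+x^2$ over $[1/2,\omega]$, differing only in that you split the logarithm into $\ln(1-x)-2\ln x$ while the paper integrates $(1-x)\ln\frac{1-x}{x^2}$ by parts in one step. One small sign slip: the antiderivative of $(1-x)\ln(1-x)$ is $+\frac{(1-x)^2}{4}\bigl(1-2\ln(1-x)\bigr)$, not $-\frac{(1-x)^2}{4}\bigl(1-2\ln(1-x)\bigr)$ (the minus sign appearing in the proof of Lemma~\ref{lem:vol-CthreeII} belongs to the negated integral computed there); with that corrected, your bookkeeping, using $\omega^2=1-\omega$, $\omega^3=2\omega-1$, $\omega^4=2-3\omega$, and $\ln(1-\omega)=2\ln\omega$, does yield exactly \eqref{eq:vol-CthreeI}.
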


\begin{proof}
Using the characterization \eqref{eq:CthreeI-characterization}
of the set $\CthreeI$ we obtain, on noting that
$1>(1-x)/x\ge x$ for $1/2<x\le \omega$ (since $\omega$ is the
positive root of $\omega^2=1-\omega$) 
and $1>(1-x)/y\ge x $ for $x\le y\le (1-x)/x$,
\begin{align}
\label{eq:volI-integral0}
        \vol(\CthreeI) 
	&= \int_{1/2}^{\omega} \int_{x}^{\frac {1-x} x} 
	\int_x^{\frac {1-x} y} dz\, dy\, dx
        = \int_{1/2}^{\omega} \int_{x}^{\frac {1-x} x}
	\left(\frac {1-x} y - x\right)\, dy\, dx 
	\\
	\notag
        &= 
	\int_{1/2}^{\omega} (1-x) 
	\ln \left( \frac {1-x}{x^2}\right) \, dx -
	\int_{1/2}^{\omega} x\left(\frac {1-x} x - x\right)\, dx 
	\\
	\notag
	&= I_1-I_2,
\end{align}
say.  The integrals $I_1$ and $I_2$ can be evaluated as follows,
using the relations (see \eqref{eq:omega-relation}) 
$\omega^2=1-\omega$ and $\omega^3=\omega-\omega^2=2\omega-1$:
\begin{align}
    \label{eq:volI-integral1}
I_1 &= \left[\frac{-(1-x)^2}{2}
\ln\left(\frac{1-x}{x^2}\right)
\right]_{1/2}^\omega +
\int_{1/2}^\omega \frac{(1-x)^2}{2}\left(-\frac1{1-x}-\frac{2}{x}\right)
\, dx
\\
\notag
&= 
\frac{-(1-\omega)^2}{2}\ln\left(\frac{1-\omega}
{\omega^2}\right)
+\frac{\ln 2}{8}
+\int_{1/2}^\omega\left(-\frac1x+\frac32-\frac{x}2\right)\, dx
\\
\notag
 &=\frac{\ln 2}{8}
 -\ln (2\omega) +\frac{3(\omega-1/2)}{2}
-\frac{\omega^2-1/4}{4}
\\
\notag
 &=\frac{\ln 2}{8}
 -\ln\left(2\omega\right)+\frac{7\omega}{4}-\frac{15}{16},
\\[1ex]
\label{eq:volI-integral2}
I_2 &= \int_{1/2}^{\omega} 
	\left(1-x-x^2\right)\, dx 
	= \omega-\frac12
	-\frac{\omega^2-1/4} 2
	-\frac{\omega^3-1/8} 3 
=\frac{5\omega}{6}-\frac12.
\end{align}
Substituting \eqref{eq:volI-integral1} and \eqref{eq:volI-integral2}
into \eqref{eq:volI-integral0} yields \eqref{eq:vol-CthreeI}.
\end{proof}

\begin{proof}[Proof of Theorem \ref{thm:main1}]
Combining \eqref{eq:pn-definition} and 
Lemmas \ref{lem:volume3-splitup}, 
\ref{lem:vol-CthreeII}, and 
\ref{lem:vol-CthreeI},  we obtain
\begin{align*}
       p_3^* &= \vol(\NTthree) = 3 \vol (\CthreeI)
\\
       &= 3 
\left(\frac{\ln 2}{8}
-\ln (2\omega)+\frac{11\omega}{12}-\frac{7}{16}\right)
\\
&= 3\left(\frac{\ln 2}{8}
-\ln(\sqrt{5}-1)+\frac{11\sqrt{5}-11}{24}-\frac{7}{16}\right)
\\
&= \frac {3 \ln 2} 8 - 3 \ln(\sqrt 5 - 1) + \frac {11 \sqrt 5} {8}
	- \frac {43} {16}
	=0.01121759\dots
\end{align*}
and 
\begin{align*}
        p_3&=\vol(\Cthree) = 
	6\vol(\CthreeI) + 6\vol(\CthreeII) \\
&= 6\left(\frac{\ln 2}{8}
-\ln(\sqrt{5}-1)+\frac{11\sqrt{5}-11}{24}-\frac{7}{16}\right) 
+
	6 \left(\frac 3 {16} - \frac {\ln 2} 8\right)
\\
        &= \frac {11 \sqrt 5 - 17} 4 - 6\ln (\sqrt 5 - 1)
	=0.6275748\dots
\end{align*}
These are the desired formulas 
\eqref{eq:vol-nontransitive3tuples} 
and 
\eqref{eq:vol-cyclic3tuples}. 
\end{proof}

\section{Proof of Theorem~\ref{thm:main2}}
\label{sec:proof-theorem2}

\begin{proof}[Proof of Theorem \ref{thm:main2}, upper bound] 
For the upper bound in \eqref{eq:vol-cyclic-n-tuples},
 note
that, by \eqref{eq:pi-n-general-formula}, a cyclic tuple $\xn\in[0,1]^n$ must satisfy
\begin{equation*}
  \min\xn\le \pi_n=1-\frac{1}{4\cos^2(\pi/(n+2))} <\frac34.
  \end{equation*}
Therefore any tuple $\xn\in[3/4,1]^n$  is \emph{not} cyclic. Moreover, since, by Lemma \ref{lem:symmetry-ntuples}(iii),
a tuple $\xn$ is cyclic if and only if the complementary tuple
$(1-x_1,\dots, 1-x_n)$ is cyclic, any tuple $\xn$ satisfying $x_i\in[0,1/4]$ for all $i$ is also \emph{not} cyclic. Thus the set $\Cn$  of cyclic tuples lies in the complement of the sets $[3/4,1]^n$ and $[0,1/4]^n$. Hence we have
\begin{equation*}
p_n=\vol(\Cn)\le 1-\vol\left([3/4,1]^n\right)-\vol\left([0,1/4]^n\right)
=1-2\left(\frac14\right)^n,
\end{equation*}
which is the desired upper bound.
\end{proof}

We next turn to the lower bound in \eqref{eq:vol-cyclic-n-tuples}.  The
argument is based on the following lemma, which gives a sufficient
condition for an $n$-tuple to be cyclic.  Recall our convention that
subscripts in $n$-tuples $\xn$ are to be interpreted modulo $n$.

\begin{lem}
\label{lem:up-down-construction}
Let $\xn \in [0,1]^n$, and suppose that there exists 
an index $i\in\{1,\dots,n\}$ such that
\begin{equation}
    \label{eq:pairwise-sum-condition3}
         x_i+x_{i+1} \ge 1 \text{ and } x_{i+2}+x_{i+3} \le 1.
   \end{equation}
Then $\xn$ is cyclic.
\end{lem}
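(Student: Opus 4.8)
The plan is to construct explicit independent random variables $U_1,\dots,U_n$ realizing a tuple that dominates $\xn$ coordinatewise in the appropriate cyclic sense, and then observe that the set of cyclic tuples is ``downward-closed'' in a suitable direction — or, more robustly, to build variables realizing exactly $\xn$ by a direct ``up-down'' construction around the cycle. Concretely, after a cyclic rotation (which preserves the cyclic property by Lemma \ref{lem:symmetry-ntuples}(i)), I may assume the index $i$ in \eqref{eq:pairwise-sum-condition3} equals $n-1$, so that $x_{n-1}+x_n\ge 1$ and $x_1+x_2\le 1$. The idea is to let the variables $U_1,\dots,U_n$ live on disjoint ``levels'' except for a few carefully chosen overlaps that produce the prescribed comparison probabilities; the pair with sum $\ge 1$ and the pair with sum $\le 1$ are exactly the ``slack'' needed to close the cycle consistently, which is why the condition \eqref{eq:pairwise-sum-condition3} appears.

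First I would handle the generic ``interior'' of the cycle: for the indices where consecutive constraints $P(U_{j+1}>U_j)=x_j$ can be met independently, I would place $U_j$ on an interval $I_j$ and $U_{j+1}$ overlapping it so that $U_{j+1}$ exceeds $U_j$ with probability exactly $x_j$, arranging the supports so that non-adjacent variables are almost surely ordered in a fixed way (this is the standard trick: a chain $U_1 \prec U_2 \prec \cdots$ of ``mostly separated'' variables, each adjacent pair sharing a small overlap zone). Second, the two special pairs need to be treated together: the pair with $x_{n-1}+x_n\ge 1$ forces $U_{n-1}, U_n, U_1$ to overlap enough that both $P(U_n>U_{n-1})=x_{n-1}$ and $P(U_1>U_n)=x_n$ hold even though $U_1$ is the ``bottom'' of the chain — this is possible precisely when the two probabilities sum to at least $1$ (otherwise $U_1$ cannot simultaneously be below $U_2$ with high probability and above $U_n$). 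The pair with $x_1+x_2\le 1$ gives room at the other end: $P(U_2>U_1)=x_1$ and $P(U_3>U_2)=x_2$ can be arranged with $U_1$ far below, again exactly when the sum is at most $1$. Finally I would verify $P(U_j=U_k)=0$ for $j\ne k$, which is automatic if the variables have continuous (e.g. piecewise-uniform) distributions, and check that all the prescribed probabilities come out correctly by a direct computation with these piecewise-uniform densities.

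I expect the main obstacle to be bookkeeping: choosing the supports and the overlap lengths so that (a) every adjacent pair realizes its target probability exactly, (b) every non-adjacent pair is deterministically ordered (so it contributes nothing spurious), and (c) the global cyclic consistency holds — i.e. the chain can ``wrap around'' from $U_n$ back up past $U_1$. The cleanest route is probably to reduce to a two- or three-variable building block: realize the sub-chain $U_1,U_2$ and the wrap-around $U_{n-1},U_n,U_1$ as small gadgets, and glue a trivial separated chain in between for the remaining indices $3,\dots,n-2$ (where one can simply take $x_j$ arbitrary by overlapping $U_j$ and $U_{j+1}$ appropriately, all within thin slabs that don't interact with the gadgets). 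An alternative, possibly shorter, approach is to invoke Lemma \ref{lem:cyclic-characterizations} or known partial characterizations only for small $n$ and induct on $n$ by splicing, but the explicit construction seems most transparent and is self-contained. The worst case to watch is when one of $x_1,x_2$ is $0$ or one of $x_{n-1},x_n$ is $1$; there the overlaps degenerate, but the construction still goes through with a degenerate (point-mass-free) limiting choice, or one can perturb and use that $\Cn$ is closed.
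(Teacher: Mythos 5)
There is a genuine gap: the core mechanism you describe cannot work. You require simultaneously that (a) every cyclically adjacent pair realizes an \emph{arbitrary} prescribed probability $x_j$ and (b) every non-adjacent pair is almost surely ordered in a fixed way along the chain. These two requirements are incompatible. Indeed, if $U_{j+2}>U_j$ almost surely, then on the event where neither $U_{j+1}>U_j$ nor $U_{j+2}>U_{j+1}$ holds one would have $U_{j+2}\le U_{j+1}\le U_j$, a null event; hence
\begin{equation*}
P(U_{j+1}>U_j)+P(U_{j+2}>U_{j+1})\ \ge\ P\bigl(\{U_{j+1}>U_j\}\cup\{U_{j+2}>U_{j+1}\}\bigr)=1,
\end{equation*}
i.e.\ $x_j+x_{j+1}\ge 1$; if instead $U_{j+2}<U_j$ almost surely, the symmetric argument forces $x_j+x_{j+1}\le 1$. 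The hypothesis \eqref{eq:pairwise-sum-condition3} gives such a sum condition for only two specific consecutive pairs, while your ``mostly separated chain with small overlap zones'' would impose one for \emph{every} interior pair; e.g.\ an interior stretch with $x_j=0.1$, $x_{j+1}=0.2$ already rules out your increasing chain. So this is not a bookkeeping issue but a structural one, and the rest of the proposal (the two ``gadgets'' at the ends, the claim that the sum conditions are exactly what makes them close up) is asserted rather than constructed or verified.

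The paper's proof sidesteps precisely this obstruction: it makes no attempt to control non-adjacent comparisons (the definition of a cyclic tuple does not constrain them), and instead gives each interior variable $U_i$ a two-point distribution on $\{-i,\,+i\}$, with the negative value lying below the supports of all relevant neighbors. Then $U_{i+1}>U_i$ holds exactly when $U_{i+1}=i+1$, so $P(U_{i+1}>U_i)=x_i$ can be any number in $[0,1]$, with no induced constraints on neighboring sums. The hypothesis \eqref{eq:pairwise-sum-condition3} (normalized to $x_{n-2}+x_{n-1}\ge1$, $x_n+x_1\le1$) enters only at the wrap-around, where it guarantees that the quantities $x_{n-1}+x_{n-2}-1$ and $x_1/(1-x_n)$ used in defining $U_{n-1}$ and $U_2$ lie in $[0,1]$ and hence are legitimate probabilities; the condition $P(U_i=U_j)=0$ is automatic because the finitely many support points of distinct variables are distinct. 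If you want to rescue your approach, you should drop requirement (b) entirely and redesign the interior variables along these lines; as written, the construction you outline fails already for generic interior values of the tuple.
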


\begin{proof}
Since, by Lemma \ref{lem:symmetry-ntuples}(i), a cyclic permutation of a
cyclic tuple is also cyclic,  we may assume without loss of generality  
that the assumption \eqref{eq:pairwise-sum-condition3} of the lemma 
holds with $i=n-2$, i.e., that
\begin{equation}
\label{eq:pairwise-sum-condition3a}
        x_{n-2}+x_{n-1} \ge 1\text{ and } x_n+x_1 \le 1.
\end{equation}

Let $\xn\in[0,1]^n$ be an $n$-tuple satisfying
\eqref{eq:pairwise-sum-condition3a}. Define
independent random variables $U_1, \dots, U_n$ 
with values in $\{-n+1,-n+2,\dots,n+1,n+2\}$ as follows:
\begin{equation}
\label{eq:Ui-construction}
\left\{
    \begin{aligned}
        P(U_{n-1} = -n+1) &= 1-x_{n-2}, \\
        P(U_i = -i) &= 1-x_{i-1} \quad (n-2 \ge i \ge 3), \\
        P(U_2 = -2) &= 1- \frac {x_1} {1-x_n}, \\
        P(U_1 = 0) &= 1-x_n, \\
        P(U_2 = 2) &= \frac {x_1} {1-x_n}, \\
        P(U_i = i) &= x_{i-1} \quad  (3 \le i \le n-2), \\
        P(U_{n-1} = n-1) &= x_{n-1}+x_{n-2}-1, \\
        P(U_n = n) &= 1, \\
        P(U_1 = n+1) &= x_n, \\
        P(U_{n-1} = n+2) &= 1-x_{n-1}.
    \end{aligned}
\right.
\end{equation}
The values of the random variables $U_i$ are shown below:
\begin{center}
\usetikzlibrary{arrows}
\begin{tikzpicture}
    \draw[latex-latex] (-7.5,0) -- (6.5,0) ;
    \draw[black] (-7,3pt) -- (-7,-3pt) node[below] {$U_{n-1}$} ;
    \node at (-7, 11pt) {$-(n-1)$};
    \node[below] at (-6,-6pt) {\dots} ;
    \draw[black] (-5,3pt) -- (-5,-3pt) node[below] {$U_i$} ;
    \node at (-5, 11pt) {$-i$};
    \node[below] at (-4,-6pt) {\dots} ;
    \draw[black] (-3,3pt) -- (-3,-3pt) node[below] {$U_2$} ;
    \node at (-3, 11pt) {$-2$};
    \draw[black] (-2,3pt) -- (-2,-3pt) node[below] {$U_1$} ;
    \node at (-2, 11.5pt) {$0$};
    \draw[black] (-1,3pt) -- (-1,-3pt) node[below] {$U_2$} ;
    \node at (-1, 11.5pt) {$2$};
    \node[below] at (0,-6pt) {\dots} ;
    \draw[black] (1,3pt) -- (1,-3pt) node[below] {$U_i$} ;
    \node at (1, 11.5pt) {$i$};
    \node[below] at (2,-6pt) {\dots} ;
    \draw[black] (3,3pt) -- (3,-3pt) node[below] {$U_{n-1}$} ;
    \node at (3, 11.5pt) {$n-1$};
    \draw[black] (4,3pt) -- (4,-3pt) node[below] {$U_n$} ;
    \node at (4, 11pt) {$n$};
    \draw[black] (4.9,3pt) -- (4.9,-3pt) node[below] {$U_1$} ;
    \node at (4.9, 11.5pt) {$n+1$};
    \draw[black] (6,3pt) -- (6,-3pt) node[below] {$U_{n-1}$} ;
    \node at (6, 11.5pt) {$n+2$};
\end{tikzpicture}
\end{center}

The idea behind this construction is the following: If we let $U_i$ be
random variables with values $\pm i$, then $U_{i+1}>U_i$ holds if and
only if $U_{i+1}=i+1$.  Thus, if we require that  $P(U_{i+1}=i+1)=x_i$
(and hence $P(U_{i+1}=-i-1)=1-x_i$), then $U_{i+1}>U_i$ holds with the
desired probability $x_i$.  This is indeed how the variables  
$U_i$ in \eqref{eq:Ui-construction} are defined for the ``interior''
indices $i=3,\dots,n-2$,  so the probabilities $P(U_{i+1}>U_i)$
have the desired value $x_i$ if $2\le i\le n-3$.

For the remaining indices $i=1,2,n-1,n$, the definition of $U_i$ 
has to be adjusted to ensure that the ``wrap-around'' probability $P(U_n>U_1)$
also has the desired value. The following calculations show 
that if $U_1, U_2, U_{n-1}, U_n$ are defined
as in \eqref{eq:Ui-construction}, then the remaining
probabilities $P(U_{i+i}>U_i)$ also have the desired values:  
\begin{align*}
P(U_2>U_1) & = P(U_2=2)P(U_1=0)=\frac{x_1}{1-x_n}\cdot (1-x_n)=x_1,
\\
P(U_{n}>U_{n-1})&=1-P(U_{n-1}=n+2)=1-(1-x_{n-1})=x_{n-1},
\\
P(U_{n-1}>U_{n-2})&=1-P(U_{n-1}=-n+1)=1-(1-x_{n-2})=x_{n-2},
\\
P(U_1>U_n) & = P(U_1=n+1)=x_n.
\end{align*}
The assumption \eqref{eq:pairwise-sum-condition3a} ensures that the
numbers $x_1/(1-x_n)$ and $x_{n-1}+x_{n-2}-1$ arising in the definition
of $U_2$ and $U_{n-1}$ are contained in the interval $[0,1]$ and thus
can represent probabilities.

Thus the tuple $\xn$ is indeed cyclic.
\end{proof}

\begin{lem}
\label{lem:non-cyclic-condition}
Suppose $\xn\in[0,1]^n$ satisfies \textbf{neither} of the conditions
\begin{equation}
\label{eq:pairwise-sum-condition1}
        x_i+x_{i+1}<1
\quad (i=1,\dots,n)
\end{equation}
and
\begin{equation}
\label{eq:pairwise-sum-condition2}
        x_i+x_{i+1}>1
\quad (i=1,\dots,n).
\end{equation}
Then $\xn$ is cyclic.
\end{lem}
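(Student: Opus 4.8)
The plan is to reduce the statement to Lemma~\ref{lem:up-down-construction}. Writing $s_i = x_i + x_{i+1}$ with subscripts interpreted mod $n$, the hypothesis that $\xn$ satisfies neither \eqref{eq:pairwise-sum-condition1} nor \eqref{eq:pairwise-sum-condition2} says precisely that there is an index with $s_i \ge 1$ and an index with $s_j \le 1$. By Lemma~\ref{lem:up-down-construction} it is enough to find such a pair at cyclic distance exactly $2$, i.e.\ an index $i$ with $s_i \ge 1$ and $s_{i+2} \le 1$; I would then invoke Lemma~\ref{lem:up-down-construction} with that $i$.

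First I would suppose, for contradiction, that no such $i$ exists, so that $s_i \ge 1$ forces $s_{i+2} > 1$ for every $i$. Let $A = \{i : s_i \ge 1\}$. Then $A$ is nonempty, it is invariant under the shift $i \mapsto i+2$, and for every $i \in A$ one in fact has $s_{i+2} > 1$ strictly (and hence, applying this to $i-2 \in A$, also $s_i > 1$ for every $i \in A$).

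Next I would split on the parity of $n$. If $n$ is odd, the map $i \mapsto i+2$ permutes all $n$ indices cyclically, so $A$ must be the entire index set; then $s_i > 1$ for all $i$, contradicting the failure of \eqref{eq:pairwise-sum-condition2}. If $n$ is even, $A$ is a union of residue classes modulo $2$; if $A$ is everything we again conclude $s_i > 1$ for all $i$, a contradiction, so $A$ is exactly one parity class $\Pi$. In that case $s_i > 1$ for $i \in \Pi$ and $s_i < 1$ for $i \notin \Pi$, so the cyclic sequence $s_1, s_2, \dots, s_n$ strictly alternates across the value $1$. The crucial step is to convert this into a contradiction using the fact that the $s_i$ come from a genuine tuple: for consecutive indices $j$ (outside $\Pi$) and $j+1$ (inside $\Pi$) we have $s_j < 1 < s_{j+1}$, and subtracting gives $x_{j+2} - x_j = s_{j+1} - s_j > 0$; running $j$ through one full parity class around the cycle then produces a chain of strict inequalities $x_j < x_{j+2} < \dots < x_j$, which is absurd.

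The main obstacle is the even case: closure of $A$ under $i \mapsto i+2$ does not by itself force $A$ to be the whole index set, and the alternating configuration that survives genuinely requires the structural constraint $|s_{i+1} - s_i| = |x_{i+2} - x_i| \le 1$ (automatic for real tuples but not for arbitrary sequences in $[0,2]$) to be excluded --- this is exactly what the chain-of-inequalities argument exploits. I would also check that indices with $s_i = 1$ cause no trouble: in the subcase $A = \Pi$ such an index would lie in $A$ and hence satisfy $s_i > 1$, a contradiction, so no $s_i$ equals $1$ there, while everywhere else only the non-strict inequalities $s_i \ge 1$ and $s_j \le 1$ are needed.
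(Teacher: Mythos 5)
Your proposal is correct and follows essentially the same route as the paper: reduce to Lemma~\ref{lem:up-down-construction} by finding an index with $s_i\ge 1$ and $s_{i+2}\le 1$, argue by contradiction, and split on the parity of $n$. The only difference is the finishing move in the even case: you telescope the strict inequalities $x_{j+2}-x_j=s_{j+1}-s_j>0$ around one parity class to reach $x_j<x_j$, whereas the paper compares the two sums $\sum_j s_{2j-1}$ and $\sum_j s_{2j}$, which would have to lie on opposite sides of $n/2$ yet both equal $\sum_i x_i$ --- two equivalent ways of exploiting that the $s_i$ are consecutive pairwise sums of a single tuple.
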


\begin{proof}
Let 
\[
s_i=x_i+x_{i+1}.
\]
By Lemma \ref{lem:up-down-construction}, it suffices to show that 
if neither \eqref{eq:pairwise-sum-condition1} nor
\eqref{eq:pairwise-sum-condition2} hold, 
then there exists an index $i\in\{1,\dots,n\}$ such that
\begin{equation}
    \label{eq:pairwise-sum-condition3b}
         s_i \ge 1 \text{ and } s_{i+2}\le 1.
   \end{equation}
We split the argument into two cases based on the parity of $n$.

If $n$ is odd,  consider the sequence of $n$ numbers 
$s_1, s_3, \dots, s_{n}, s_2,s_4,\dots, s_{n-1}$.
If there is no $i$ for which \eqref{eq:pairwise-sum-condition3b} holds, 
then the numbers in this sequence must be either all strictly greater 
than $1$ or all strictly less than $1$.  But since this sequence is a
permutation of $s_1,s_2,\dots,s_n$, it then follows that either 
\eqref{eq:pairwise-sum-condition1} or \eqref{eq:pairwise-sum-condition2}
holds, contradicting the assumption of the lemma.

If $n$ is even, consider the two sequences 
$A = \{s_1,s_3,\dots, s_{n-1}\}$ and $B = \{s_2,s_4,\dots, s_{n}\}$. 
If there is no $i$ for which \eqref{eq:pairwise-sum-condition3b} holds,
we conclude as before that, \emph{within each of these two sequences}, 
either all elements are greater than $1$ or all elements are less than $1$.
If the elements of \emph{both} sequences are all greater than $1$ or 
all less than $1$, then \eqref{eq:pairwise-sum-condition1} or
\eqref{eq:pairwise-sum-condition2} follows, and we again have a
contradiction.

In the remaining case the  elements of one sequence
are all greater than $1$ and those of the other sequence are all
less than $1$.  Since each sequence has exactly $n/2$ elements,  
one of the two sums $\sum_{a\in A}a$ and $\sum_{b\in B}b$
must be strictly greater than $n/2$, while the other sum must 
be strictly less than $n/2$.  On the other hand, the identity 
\begin{align*}
\sum_{a \in A} a &= \sum_{j=1}^{n/2} (x_{2j-1}+x_{2j})
=\sum_{i=1}^n x_i = \sum_{j=1}^{n/2} (x_{2j}+x_{2j+1})
=\sum_{b \in B}b 
\end{align*}
shows that the two sums $\sum_{a\in A}a$ and $\sum_{b\in B}b$
are in fact equal. Thus, this case cannot occur and the
proof of the lemma is complete.
\end{proof}

Define the sets  
\begin{align}
\label{eq:DnI-def}
\DnI&=\{\xn\in[0,1]^n: \text{ $\xn$ satisfies  
\eqref{eq:pairwise-sum-condition1}}\},
\\
\label{eq:DnII-def}
\DnII&=\{\xn\in[0,1]^n: \text{ $\xn$ satisfies  
\eqref{eq:pairwise-sum-condition2}} \},
\\
\label{eq:Dnstar-def}
\Dnstar&=\{\xn\in\DnI: \text{ $x_1=\min\xn$}\}.  
\end{align}

\begin{lem}
\label{lem:vol-Dn}
We have
\begin{align}
\label{eq:Dn-symmetry1}
\vol(\DnI)&=\vol(\DnII),
\\
\label{eq:Dn-symmetry2}
\vol(\DnI)&=n\vol(\Dnstar),
\\
\label{eq:Cn-lower-bound}
\vol(\Cn) &\ge 1-2n\vol(\Dnstar).
\end{align}
\end{lem}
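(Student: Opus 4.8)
The plan is to establish the three claims of Lemma \ref{lem:vol-Dn} in order, deducing each from the previously established structure. For \eqref{eq:Dn-symmetry1}, I would exhibit a volume-preserving bijection between $\DnI$ and $\DnII$: the complementation map $\xn \mapsto (1-x_1,\dots,1-x_n)$ sends a tuple satisfying $x_i+x_{i+1}<1$ for all $i$ to one satisfying $(1-x_i)+(1-x_{i+1})>1$ for all $i$, i.e., it maps $\DnI$ onto $\DnII$; since this map is an isometry of $[0,1]^n$ (a reflection through the center), it preserves Lebesgue measure, giving \eqref{eq:Dn-symmetry1}.

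For \eqref{eq:Dn-symmetry2}, I would decompose $\DnI$ according to which coordinate is the minimum. Up to a set of measure zero (where two coordinates tie for the minimum, or a coordinate is $1/2$), every tuple in $\DnI$ has a unique smallest coordinate $x_j$. The condition defining $\DnI$, namely $x_i + x_{i+1} < 1$ for all $i$ (with subscripts mod $n$), is invariant under cyclic shifts of the index, so the cyclic shift sending position $j$ to position $1$ is a measure-preserving bijection from $\{\xn \in \DnI : x_j = \min \xn\}$ onto $\Dnstar$. Summing over the $n$ choices of $j$ yields $\vol(\DnI) = n \vol(\Dnstar)$. The one point requiring a sentence of care is that these $n$ pieces are disjoint up to measure zero and exhaust $\DnI$ up to measure zero, so no overcounting or undercounting occurs.

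For \eqref{eq:Cn-lower-bound}, I would invoke Lemma \ref{lem:non-cyclic-condition}: any tuple that satisfies \emph{neither} \eqref{eq:pairwise-sum-condition1} nor \eqref{eq:pairwise-sum-condition2} is cyclic. Contrapositively, the complement of $\Cn$ in $[0,1]^n$ is contained in $\DnI \cup \DnII$, so
\[
\vol(\Cn) \ge 1 - \vol(\DnI) - \vol(\DnII) = 1 - 2\vol(\DnI) = 1 - 2n\vol(\Dnstar),
\]
using \eqref{eq:Dn-symmetry1} and \eqref{eq:Dn-symmetry2} in the last two steps. I do not anticipate a genuine obstacle here; the only mild subtlety is being careful that the boundary-case sets (ties, coordinates equal to $1/2$, tuples satisfying one of the two pairwise-sum conditions with equality somewhere) all have Lebesgue measure zero and hence can be ignored throughout, which lets the inclusion and the symmetry arguments go through cleanly.
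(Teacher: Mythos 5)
Your proposal is correct and follows essentially the same route as the paper: complementation $\xn\mapsto\ovxn$ for \eqref{eq:Dn-symmetry1}, cyclic-shift invariance of $\DnI$ together with the decomposition by the position of the minimum for \eqref{eq:Dn-symmetry2}, and Lemma \ref{lem:non-cyclic-condition} plus the two symmetry identities for \eqref{eq:Cn-lower-bound}. Your extra remarks on the measure-zero tie sets only make explicit what the paper's argument implicitly ignores.
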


\begin{proof} 
First note that an $n$-tuple $\xn$ satisfies condition 
\eqref{eq:pairwise-sum-condition2} if and only if the complementary
$n$-tuple $\ovxn=(1-x_1,\dots,1-x_n)$ satisfies 
\eqref{eq:pairwise-sum-condition1}.  The transformation $\xn\to\ovxn$
then shows that the regions $\DnI$ and $\DnII$ have the same volume.
This yields \eqref{eq:Dn-symmetry1}.

For the proof of \eqref{eq:Dn-symmetry2}, note that if a tuple $\xn$
satisfies condition \eqref{eq:pairwise-sum-condition1}, then so does any
cyclic permutation of this tuple. Therefore the set $\DnI$ is invariant
with respect to taking cyclic permutations.  It follows that the 
volume of $\DnI$ is $n$ times that of $\Dnstar$,
the set consisting of those tuples in $\DnI$ with smallest component
$x_1$. This proves \eqref{eq:Dn-symmetry2}.

Finally, \eqref{eq:Cn-lower-bound} follows on noting that, 
by Lemma \ref{lem:non-cyclic-condition}, the set $\Cn$ of cyclic
$n$-tuples contains the complement of the set $\DnI\cup\DnII$. Therefore, 
\begin{align*}
\vol(\Cn) &\ge 1-\vol(\DnI\cup\DnII)\ge 1-\vol(\DnI)-\vol(\DnII)
\\
&=1-2\vol(\DnI)=1-2n\vol(\Dnstar),
\end{align*}
where in the last step we have used \eqref{eq:Dn-symmetry1} and
\eqref{eq:Dn-symmetry2}.  
\end{proof}

To obtain the desired lower bound for the volume of $\Cn$, it now remains
to obtain an appropriate upper bound for the volume of the region
$\Dnstar$.  The following lemma expresses this volume in terms of a
combinatorial quantity counting \emph{alternating} permutations.  Here a
permutation of a set of $n$ distinct real numbers $x_1,\dots,x_n$ is
called \emph{up-down alternating} if it satisfies $x_1<x_2>x_3<\dots$,
i.e., if $x_{i+1}-x_i$ is positive for odd $i$ and negative for even $i$;
see Andr\'e \cite{andre1881} for more about such permutations.

\begin{lem}
\label{lem:vol-Dn-formula}
We have
\begin{equation}
\label{eq:vol-Dn-formula}
\vol(\Dnstar)
        =\frac{A_{n-1}}{(2n)(n-1)!},
\end{equation}
where $A_{n-1}$ is the number of up-down alternating permutations of
length $n-1$.
\end{lem}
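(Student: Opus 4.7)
The plan is to parametrize $\Dnstar$ by fixing $x_1$, performing an affine rescaling of the remaining coordinates, and then recognizing the resulting region as the one whose volume encodes up-down alternating permutations.

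First I would condition on $x_1 = t$. The cyclic constraints $x_1 + x_2 < 1$ and $x_n + x_1 < 1$ combined with $x_1 = \min\xn$ force $t < 1/2$, so the integration range for $t$ is $[0,1/2)$. For fixed $t$, set $s = 1 - 2t$ and substitute $z_i = (x_i - t)/s$ for $i = 2, \dots, n$; this contributes a Jacobian of $s^{n-1} = (1-2t)^{n-1}$. The minimality condition $x_i \ge t$ becomes $z_i \ge 0$; the two cyclic constraints involving $x_1$ become $z_2 < 1$ and $z_n < 1$, both of which are redundant once the interior constraints $z_i + z_{i+1} < 1$ (the rescaled form of $x_i + x_{i+1} < 1$ for $i = 2, \dots, n-1$) hold together with $z_i \ge 0$. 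Using $\int_0^{1/2}(1-2t)^{n-1}\,dt = 1/(2n)$, this yields $\vol(\Dnstar) = V_{n-1}/(2n)$, where $V_{n-1}$ denotes the volume of
\[
\left\{(w_1,\dots,w_{n-1}) \in [0,1]^{n-1} : w_i + w_{i+1} < 1 \text{ for } i = 1,\dots,n-2 \right\}
\]
after the harmless relabeling $w_j = z_{j+1}$.

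Next I would apply the measure-preserving involution on $[0,1]^{n-1}$ defined by $u_i = w_i$ for odd $i$ and $u_i = 1 - w_i$ for even $i$. A short line-by-line check shows that the inequality $w_i + w_{i+1} < 1$ translates into $u_i < u_{i+1}$ when $i$ is odd and $u_{i+1} < u_i$ when $i$ is even, so the joint system is equivalent to the up-down alternating pattern $u_1 < u_2 > u_3 < u_4 > \cdots$. Therefore $V_{n-1}$ equals the probability that $n-1$ i.i.d.\ uniform random variables on $[0,1]$ realize this relative order; by symmetry of continuous i.i.d.\ variables each of the $(n-1)!$ orderings has equal probability, so $V_{n-1} = A_{n-1}/(n-1)!$, and assembling the factors gives the claimed formula.

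There is no serious obstacle in this argument; the main point requiring care is the bookkeeping of the cyclic constraints at the wrap-around index — specifically, confirming that the two inequalities involving $x_1$ are absorbed by the restriction $t \in [0,1/2)$ and by the condition $z_i \ge 0$ — so that the region factors cleanly as a product of the scaling integral in $t$ and the combinatorial region governing $V_{n-1}$.
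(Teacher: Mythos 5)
Your argument is correct and follows essentially the same route as the paper: integrate out $x_1$ via the affine rescaling with Jacobian $(1-2t)^{n-1}$ (giving the factor $1/(2n)$), reduce to the chain region $w_i+w_{i+1}<1$, and then use the parity flip $u_i=w_i$ (odd $i$), $u_i=1-w_i$ (even $i$) to identify its volume with $A_{n-1}/(n-1)!$. The only cosmetic difference is that the paper discards the wrap-around inequality $x_n+x_1<1$ as redundant before the substitution, whereas you carry it through and observe its rescaled form $z_n<1$ is implied afterward.
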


\begin{proof}
By definition, $\Dnstar$ is the set of $n$-tuples $\xn\in[0,1]^n$
satisfying the inequalities \eqref{eq:pairwise-sum-condition1}
and in addition $x_1=\min\xn$.  Under the latter condition
the last inequality in \eqref{eq:pairwise-sum-condition1},
$x_{n}+x_1<1$,
is implied by the second-last inequality, $x_{n-1}+x_{n}<1$, and thus can be
omitted. Moreover, the inequalities \eqref{eq:pairwise-sum-condition1}
can only hold if $0\le x_1<1/2$. Thus, the set $\Dnstar$ is the set of
tuples $\xn$ satisfying
\begin{equation}
\label{eq:Dnstar-characterization}
\left\{
\begin{aligned}
        &0\le x_1<1/2
        \\
        &x_1\le x_2<1-x_1
\\
&\vdots
\\
        &x_1\le x_n<1-x_{n-1}
\end{aligned}
\right\}.
\end{equation}
Hence we have
\begin{align}
\label{eq:Dnstar-vol-calculation1}
   \vol(\Dnstar) &=\int_0^{1/2} \int_{x_1}^{1-x_1} \int_{x_1}^{1-x_2}
    \cdots \int_{x_1}^{1-x_{n-1}} dx_n 
    \cdots dx_3 dx_2 dx_1.
\end{align}
To evaluate the latter integral, we apply the change of variables 
\begin{equation}
\label{eq:Dnstar-substitution1}
y_0=x_1, \quad y_i=\frac{x_{i+1}-x_1}{1-2x_1} \quad (i=1,\dots,n-1).
\end{equation}
It is easily checked that  the transformation
\eqref{eq:Dnstar-substitution1}
has Jacobian determinant $\pm(1-2y_0)^{n-1}$ and maps the
region \eqref{eq:Dnstar-characterization} to the 
set of $n$-tuples $(y_0,\dots,y_{n-1})$ satisfying
\begin{equation}
\label{eq:Dnstar-characterization2}
0\le y_0<1/2,\quad 0\le y_1<1, \quad 0\le y_i<1-y_{i-1}\quad
(i=2,\dots,n-1).
\end{equation}
It follows that 
\begin{align} 
\label{eq:Dnstar-vol-calculation2}
   \vol(\Dnstar) &= \int_0^{1/2} (1-2y_0)^{n-1} \; dy_0 
    \int_{0}^{1} \int_{0}^{1-y_1} \cdots 
    \int_{0}^{1-y_{n-2}} dy_{n-1} \cdots dy_2 dy_1 
    \\
    \notag
    &= \frac 1 {2n} \int_{0}^{1} 
    \int_{0}^{1-y_1} \cdots 
    \int_{0}^{1-y_{n-2}} dy_{n-1} \cdots dy_2 dy_1
    =\frac{1}{2n}\vol(E_{n-1}),
\end{align}
where 
\begin{align*}
\label{eq:En-def}
E_{n-1}&=\Bigl\{(y_1,\dots,y_{n-1})\in [0,1]^{n-1}: 0\le y_1<1,
0\le y_i<1-y_{i-1}\quad (i=2,\dots,n-1)\Bigr\}.
\end{align*}

Now set, for $i=1,\dots,n-1$,  
\begin{equation}
\label{eq:Dnstar-substitution2}
u_i=\begin{cases} y_i&\text{if $i$ is odd,}
\\
1-y_{i} &\text{if $i$ is even.}
\end{cases}
\end{equation}
The transformation \eqref{eq:Dnstar-substitution2}
has Jacobian determinant $\pm 1$ and thus is volume-preserving.
Moreover, noting that the condition
$y_i<1-y_{i-1}$ is equivalent to $u_i< u_{i-1}$ when $i$ is odd, and to
$u_{i-1}<u_i$ when $i$ is even, we see that this transformation 
maps the region $E_{n-1}$ to the region 
\begin{equation*}
\label{eq:Enstar-def}
E_{n-1}^*=\{(u_1,\dots,u_{n-1})\in [0,1]^{n-1}: u_1<u_2>u_3<\dots \}.
\end{equation*}
The set $E_{n-1}^*$ is, up to a set of volume $0$,
the set of tuples in the $(n-1)$-dimensional unit cube
whose coordinates form an up-down alternating permutation of length $n-1$.  
Since there are $(n-1)!$ permutations of the coordinates 
and, by symmetry, each such permutation contributes an amount
$1/(n-1)!$ to the volume of the unit cube,  it follows that 
\begin{equation}
\label{eq:Enstar-vol}
\vol(E_{n-1})=\vol(E_{n-1}^*)=\frac{A_{n-1}}{(n-1)!},
\end{equation}
where $A_{n-1}$ is the number of up-down alternating permutations of 
length $n-1$. 
Combining \eqref{eq:Dnstar-vol-calculation2}
with \eqref {eq:Enstar-vol} we obtain
\begin{equation*}
\vol(\Dnstar)=\frac{1}{2n}\vol(E_{n-1})=\frac1{2n}\vol(E_{n-1}^*)
=\frac{A_{n-1}}{(2n)(n-1)!}.
\end{equation*}
This is the desired formula \eqref{eq:vol-Dn-formula}.
\end{proof}

\begin{lem}
\label{lem:An-bound}
The number $A_n$ of alternating permutations satisfies
\begin{equation}
\label{eq:An-bound}
\frac{A_n}{n!}\le
3\left(\frac{2}{\pi}\right)^{n+1} 
\quad (n\ge 1).
\end{equation}
\end{lem}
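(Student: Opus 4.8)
The plan is to obtain an \emph{exact} formula for $A_n/n!$ from Andr\'e's generating function $\sum_{n\ge0}A_nx^n/n!=\sec x+\tan x$ (see \cite{andre1881}) and then bound it. The engine is the classical partial fraction expansions of the tangent and secant, valid for $|x|<\pi/2$:
\begin{gather*}
\tan x=\sum_{k\ge0}\Bigl(\frac{1}{(k+\tfrac12)\pi-x}-\frac{1}{(k+\tfrac12)\pi+x}\Bigr),\\
\sec x=\sum_{k\ge0}(-1)^k\Bigl(\frac{1}{(k+\tfrac12)\pi-x}+\frac{1}{(k+\tfrac12)\pi+x}\Bigr).
\end{gather*}
Adding these and collecting the even and odd values of $k$ (the even terms keep only the pole at $(k+\tfrac12)\pi$ with coefficient $2$, the odd terms only the pole at $-(k+\tfrac12)\pi$ with coefficient $-2$) gives
\[
\sec x+\tan x=\sum_{j\ge0}\Bigl(\frac{2}{\tfrac{(4j+1)\pi}{2}-x}-\frac{2}{\tfrac{(4j+3)\pi}{2}+x}\Bigr).
\]
Expanding each summand as a geometric series in $x$ and reading off the coefficient of $x^n$ term by term --- legitimate because, for $|x|$ bounded away from $\pi/2$, the double series converges absolutely --- then yields
\[
\frac{A_n}{n!}=\frac{2^{\,n+2}}{\pi^{\,n+1}}\,S_n,
\qquad
S_n:=\sum_{j\ge0}\Bigl(\frac{1}{(4j+1)^{n+1}}-\frac{(-1)^n}{(4j+3)^{n+1}}\Bigr).
\]

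Next I would recognize $S_n$ and bound it. If $n$ is even, $-(-1)^n=-1$ and the two families merge into the alternating sum over the odd integers, $S_n=\sum_{k\ge0}(-1)^k/(2k+1)^{n+1}$; its terms decrease from $1$, so $0<S_n<1$. If $n$ is odd, $-(-1)^n=+1$ and the families merge into $S_n=\sum_{k\ge0}1/(2k+1)^{n+1}=1+3^{-(n+1)}+5^{-(n+1)}+\cdots$, which is a positive, strictly decreasing function of the odd index $n\ge1$; hence $S_n\le S_1=\sum_{k\ge0}(2k+1)^{-2}=\pi^2/8$. In every case $S_n<\tfrac32$, and therefore
\[
\frac{A_n}{n!}=\frac{2^{\,n+2}}{\pi^{\,n+1}}\,S_n<\frac32\cdot\frac{2^{\,n+2}}{\pi^{\,n+1}}=3\Bigl(\frac{2}{\pi}\Bigr)^{n+1}\qquad(n\ge1),
\]
as claimed. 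A quick check: $A_0=A_1=A_2=1$, $A_3=2$, $A_4=5$ correspond to $S_0=\pi/4$, $S_1=\pi^2/8$, $S_2=\pi^3/32$, $S_3=\pi^4/96$, $S_4=5\pi^5/1536$, in agreement with the definition of $S_n$.

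I do not expect a serious obstacle: the one step requiring care is the justification of extracting the $x^n$-coefficient term by term from the double series, which is routine. The more substantive point guiding the choice of method is that passing through the exact formula seems to be necessary here: a naive strong induction on the equivalent recursion $2(n+1)(A_{n+1}/(n+1)!)=\sum_{k=0}^n(A_k/k!)(A_{n-k}/(n-k)!)$ (which comes from $2f'=1+f^2$ for $f=\sec+\tan$) reproduces a hypothesis $A_n/n!\le\lambda(2/\pi)^{n+1}$ only when $\lambda^2/2\le\lambda$, i.e.\ $\lambda\le2$, whereas the case $n=1$ forces $\lambda\ge\pi^2/4>2$, so no constant --- in particular not $3$ --- closes the induction. (Equivalently, one may simply quote the classical Dirichlet-series formulas for the Euler numbers via the Dirichlet beta function $\beta(2m+1)$ and for the tangent numbers via $\zeta(2m+2)$; these are exactly the even- and odd-indexed instances of the displayed identity for $A_n/n!$, and the same bounds then apply.)
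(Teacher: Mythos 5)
Your proof is correct and is essentially the paper's argument: both rest on the exact series representations $A_{2m}/(2m)!=2(2/\pi)^{2m+1}\beta(2m+1)$ and $A_{2m-1}/(2m-1)!=2(2/\pi)^{2m}\sum_{k\ge0}(2k+1)^{-2m}$ (André's formulas, which the paper simply cites), followed by bounding the series factor by $1$ in the alternating case and by a constant $<\tfrac32$ in the other (you use $\pi^2/8$, the paper uses $\pi^2/6-\tfrac14$). The only difference is that you re-derive these formulas from the partial-fraction expansion of $\sec x+\tan x$ rather than quoting them, which is fine but not a new route.
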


\begin{proof}
Andr\'e \cite[p. 23]{andre1881}
showed that, for any integer $m\ge 1$,
\begin{align}
\label{eq:An-bound-even}
\frac{A_{2m}}{(2m)!}
&=2
\left(\frac{2}{\pi}\right)^{2m+1}
\left(1-
\frac1{3^{2m+1}}
+\frac1{5^{2m+1}}
-\frac1{7^{2m+1}}+\dots\right),
\\
\label{eq:An-bound-odd}
\frac{A_{2m-1}}{(2m-1)!}
&=2
\left(\frac{2}{\pi}\right)^{2m}
\left(1+
\frac1{3^{2m}} +\frac1{5^{2m}}
+\frac1{7^{2m}}+\dots\right).
\end{align}
The desired bound \eqref{eq:An-bound} follows as the series in
parentheses in \eqref{eq:An-bound-even} is bounded above by $1$, while
the series in \eqref{eq:An-bound-odd} is at most
\begin{equation*}
\le 1+\sum_{n=3}^\infty
\frac{1}{n^2}=-\frac{1}{4}+\frac{\pi^2}{6}<\frac32.
\qedhere
\end{equation*}
\end{proof}

\begin{proof}[Proof of Theorem \ref{thm:main2}, lower bound] 
Combining Lemmas \ref{lem:vol-Dn}, \ref{lem:vol-Dn-formula},
and \ref{lem:An-bound} 
we obtain
\begin{align*}
p_n=\vol(\Cn) &\ge 1-2n\vol(\Dnstar)
=1-2n\frac{A_{n-1}}{(2n)(n-1)!}
\\
&=1-\frac{A_{n-1}}{(n-1)!}
\ge 1-3\left(\frac{2}{\pi}\right)^{n},
\end{align*}
which is the desired lower bound for $p_n$.
\end{proof}

\section{Proof of Theorem \ref{thm:main3}}
\label{sec:proof-theorem3}

Let
\begin{equation}
        \label{eq:ordered-cyclic-set}
\Cthreeo=
	\left\{ \xyz \in [0,1]^3: \xyz\text{ is cyclic and
	$x \le y \le z$} \right\},
\end{equation}
i.e., $\Cthreeo$ is the set of triples in $\Cthree$ with nondecreasing
coordinates.

The following lemma characterizes the set $\Cthreeo$ 
in terms of inequalities on $x$, $y$, and $z$. Recall (see \eqref{eq:omega-def}
and \eqref{eq:omega-relation}) that $\omega=(\sqrt{5}-1)/2$ is the
positive root of $\omega^2+\omega=1$.

\begin{lem}
    \label{lem:ordered-cyclic-characterizations}
We have:    
\begin{itemize}
        \item[(i)] A triple $\xyz$ belongs to the set $\Cthreeo$
	if and only if it satisfies
        \begin{equation}
        \label{eq:smallest-variable-characterization}
        \left\{
        \begin{aligned}
            0 \le &x \le \omega \\
            x \le &y \le \sqrt {1-x} \\
            \max (y, (1-x)(1-y)) \le &z \le \min \left( 1, \frac {1-x} y \right)
        \end{aligned}
        \right\} .
        \end{equation}
        \item[(ii)] A triple $\xyz$ belongs 
	to the set $\Cthreeo$ if and only if it satisfies
    \begin{equation}
        \label{eq:middle-variable-characterization}
        \left\{
        \begin{aligned}
            0 \le &y \le 1 \\
            0 \le &x \le \min \left( y, 1-y^2 \right) \\
            \max (y, (1-x)(1-y)) \le &z \le \min \left(1, \frac{1-x} y \right)
        \end{aligned}
        \right\} .
    \end{equation}
    \end{itemize}
Moreover, the intervals for the variables $x$, $y$, and $z$ in 
\eqref{eq:smallest-variable-characterization}
and \eqref{eq:middle-variable-characterization} are always non-empty,
i.e., the upper bounds on these variables are always greater or equal to
the corresponding lower bounds.
\end{lem}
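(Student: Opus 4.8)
The plan is to read off the answer from the simplified characterization of ordered cyclic triples already established in Lemma~\ref{lem:cyclic-characterizations}(i): a triple $\xyz$ with $x\le y\le z$ is cyclic if and only if $x+yz\le 1$ and $\ovz+\ovx\,\ovy\le 1$. Thus $\Cthreeo$ is exactly the set of $\xyz\in[0,1]^3$ satisfying $x\le y\le z$, $x+yz\le 1$, and $(1-z)+(1-x)(1-y)\le 1$. Rewriting the second inequality as $z\le(1-x)/y$ (valid for $y>0$; when $y=0$ the order forces $x=0$ and one reads $(1-x)/y$ as $+\infty$, a null-set case that does not affect the later density computations), the third as $(1-x)(1-y)\le z$, and recalling $y\le z\le 1$, one sees that membership in $\Cthreeo$ is equivalent to
\[
0\le x\le y,\qquad
\max\bigl(y,\,(1-x)(1-y)\bigr)\ \le\ z\ \le\ \min\bigl(1,\,(1-x)/y\bigr).
\]
This already produces the $z$-intervals in both \eqref{eq:smallest-variable-characterization} and \eqref{eq:middle-variable-characterization}; what remains is to describe the admissible pairs $(x,y)$ in the two orders required for the two density computations, and to verify that all intervals are non-empty.

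For part (i) I would fix $x$ and determine for which $y\ge x$ the $z$-interval is non-empty. The condition $\max(\cdots)\le\min(\cdots)$ unpacks into four pairwise comparisons: $y\le 1$ and $(1-x)(1-y)\le 1$ are automatic on $[0,1]^3$; the comparison $(1-x)(1-y)\le(1-x)/y$ reduces, after dividing by the nonnegative factor $1-x$, to $y(1-y)\le 1$, again automatic; and the single nontrivial comparison $y\le(1-x)/y$ is equivalent to $y\le\sqrt{1-x}$. Hence the admissible $y$ for a given $x$ are precisely those with $x\le y\le\sqrt{1-x}$, and this interval is non-empty if and only if $x\le\sqrt{1-x}$, i.e.\ $x^2+x\le 1$, i.e.\ $0\le x\le\omega$. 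This yields \eqref{eq:smallest-variable-characterization}, and the same four comparisons certify the non-emptiness assertion.

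For part (ii) I would instead fix $y\in[0,1]$ and determine the range of $x$. The order constraint gives $0\le x\le y$, and by the computation above the $z$-interval is non-empty exactly when $y\le\sqrt{1-x}$, i.e.\ $x\le 1-y^2$; combining, $0\le x\le\min(y,1-y^2)$. Since $\min(y,1-y^2)\ge 0$ for every $y\in[0,1]$, this $x$-interval — and hence, via the part (i) analysis, every interval in \eqref{eq:middle-variable-characterization} — is non-empty. I expect the only step requiring genuine care to be the bookkeeping of the four pairwise comparisons underlying the non-emptiness claim (together with a sentence handling the degenerate boundary $y=0$, where the triple $(0,0,1)$ is the unique cyclic triple of that form); everything else is a direct rewriting of the two inequalities supplied by Lemma~\ref{lem:cyclic-characterizations}(i).
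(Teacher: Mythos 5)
Your proposal is correct and follows essentially the same route as the paper: both start from Lemma~\ref{lem:cyclic-characterizations}(i), rewrite the two inequalities $x+yz\le 1$ and $\ovz+\ovx\,\ovy\le 1$ as the bounds $\max(y,(1-x)(1-y))\le z\le\min(1,(1-x)/y)$, and obtain the constraints $x\le\omega$, $y\le\sqrt{1-x}$ (resp.\ $x\le\min(y,1-y^2)$) by exactly the same algebra the paper gets from $x+x^2\le x+yz\le 1$ and $x+y^2\le x+yz\le 1$, here packaged as the non-emptiness conditions for the $z$-interval. Your explicit remark on the measure-zero boundary $y=0$ is a harmless refinement that the paper leaves implicit.
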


\begin{proof}
By Lemma \ref{lem:cyclic-characterizations}(i), a triple
$\xyz\in[0,1]^3$ with $x\le y\le z$ is cyclic if and only if it
satisfies the inequalities 
\begin{equation}
\label{eq:cyclic-bound-simplified3}
x+yz\le 1\text{ and } (1-z)+(1-y)(1-x)\le 1.
\end{equation}
Hence $\Cthreeo$ is the set of triples $\xyz\in[0,1]^3$ satisfying 
$x\le y\le z$ and \eqref{eq:cyclic-bound-simplified3}.

Consider now a triple $\xyz\in[0,1]^3$ satisfying these conditions.
Then $x+x^2 \le x+yz\le 1$, and hence $x\le \omega$, with 
$\omega=(\sqrt{5}-1)/2$ defined as above.  Thus, we must have
\begin{equation}
\label{eq:x-bound1}
0\le x\le \omega.
\end{equation}
Next, note that  $x+y^2\le x+yz\le 1$ and hence $y\le \sqrt{1-x}$. 
Since we also have $x \le y \le z$, it follows that
\begin{equation}
\label{eq:y-bound1}
x\le y\le \sqrt{1-x}.
\end{equation}
Note that since $x\le \omega$, with $\omega$ being the positive root of 
$\omega^2+\omega=1$, we have $x^2+x\le 1$ and hence $x\le \sqrt{1-x}$.
Thus, the interval for $y$ in \eqref{eq:y-bound1} is non-empty.

Next, we have trivially
\begin{equation}
\label{eq:y-bound2}
0\le y\le 1,
\end{equation}
while the inequalities $x\le y\le z$ and $x+y^2\le x+yz\le 1$ 
imply 
\begin{equation}
\label{eq:x-bound2}
0\le x\le \min(y,1-y^2).
\end{equation}
Since $0\le y\le 1$, 
the interval \eqref{eq:x-bound2} is clearly non-empty. 

Finally, the inequalities \eqref{eq:cyclic-bound-simplified3}
imply $z\le (1-x)/y$ and 
$z\ge (1-y)(1-x)$, which combined with $0\le z\le 1$ and $x\le y\le z$ 
yields
\begin{equation}
\label{eq:z-bound}
\max (y, (1-x)(1-y)) \le z \le \min \left(1, \frac{1-x} y \right).
\end{equation}
Again the interval for $z$ here is non-empty since, by
\eqref{eq:y-bound1}, $(1-x)/y\ge y$, while we also have 
$(1-x)/y\ge (1-x)\ge (1-x)(1-y)$, $1\ge y$, and $1\ge (1-x)(1-y)$ which follow from $0 \le x,y \le 1$.

Note that the inequalities
\eqref{eq:x-bound1}, \eqref{eq:y-bound1}, and \eqref{eq:z-bound}
are exactly those in the desired characterization
\eqref{eq:smallest-variable-characterization} of $\Cthreeo$, 
while \eqref{eq:y-bound2},
\eqref{eq:x-bound2}, 
and \eqref{eq:z-bound}
are those in \eqref{eq:middle-variable-characterization}.
Thus, we
have shown that any triple $\xyz\in \Cthreeo$ must satisfy 
\eqref{eq:smallest-variable-characterization}
and 
\eqref{eq:middle-variable-characterization}
and that
the intervals for $x$, $y$, and $z$ in
\eqref{eq:smallest-variable-characterization}
and \eqref{eq:middle-variable-characterization}
are all non-trivial.

Conversely, assume that $\xyz$ satisfies
\eqref{eq:smallest-variable-characterization} or 
\eqref{eq:middle-variable-characterization}. 
Then $0\le x\le y\le z\le 1$, and $z$ must satisfy \eqref{eq:z-bound}.
The upper bound for $z$ in \eqref{eq:z-bound} implies 
$z\le (1-x)/y$ and hence $x+yz\le 1$,
i.e., the first inequality in \eqref{eq:cyclic-bound-simplified3}.
The lower bound for $z$ in \eqref{eq:z-bound} implies
$(1-x)(1-y)\le z$ and hence $(1-z)+(1-x)(1-y)\le 1$, i.e., the second inequality in
\eqref{eq:cyclic-bound-simplified3}. 
Thus, $\xyz$ must be an element of $\Cthreeo$.

This completes the proof of the lemma.
\end{proof}

\begin{proof}[Proof of Theorem \ref{thm:main3}]
Since, by Lemma \ref{lem:symmetry-triples}, 
the ``cyclic triple'' property is invariant with respect to taking
permutations, we have
\begin{equation}
\label{eq:ordered-cyclic-volume}
\vol(\Cthreeo)= \frac{1}{3!}\vol(\Cthree)=\frac16 p_3.
\end{equation}
Moreover, the distribution of the order statistics $(X_1^*,X_2^*,X_3^*)$
of a random triple in $\Cthree$ is the same as the distribution of 
a random vector $(X,Y,Z)$ that is distributed uniformly over the region 
$\Cthreeo$.  It follows that the density functions $f_1$, $f_2$,
and $f_3$ in Theorem \ref{thm:main3} are the marginal densities of the 
latter vector and thus can be obtained by integrating over appropriate
slices of the region $\Cthreeo$ and dividing by the volume of this region.
That is, we have
\begin{equation}
\label{eq:f1-definition2}
f_1(x)= \frac{1}{\vol(\Cthreeo)} \iint_{A_1(x)}1\,dz\,dy
= \frac{6}{p_3} \iint_{A_1(x)}1\,dz\,dy,
\end{equation}
where 
\begin{equation}
\label{eq:Ax-definition}
A_1(x)=\{(y,z)\in[0,1]^2: \xyz\in\Cthreeo\},
\end{equation}
along with analogous formulas for $f_2$ and $f_3$.

Using the first characterization of $\Cthreeo$ in 
    Lemma \ref{lem:ordered-cyclic-characterizations},
we see that
$f_1(x)$ is supported on the interval $[0,\omega]$, and that for each
$x$ with $0\le x\le \omega$ we have 
\begin{align*}
         f_1(x) &= \frac 6 {p_3} \int_x^{\sqrt{1-x}}
	 \int_{\max (y, (1-x)(1-y))}^{\min \left( 1, \frac {1-x} y \right)}
	 \, dz \, dy
         \\
         &=\frac 6 {p_3} \int_x^{\sqrt{1-x}}
         \left(\min \left( 1, \frac {1-x} y \right) 
	 - \max (y, (1-x)(1-y)) \right) dy
         \\[1ex]
         &= \frac 6 {p_3} \begin{cases}
            \frac 1 2 \left( x^3-3x^2 + 
	    \frac {1-x} {2-x} - (1-x) \ln (1-x) \right)
	    &\text{if $0 \le x \le 1-\omega $,}
            \\[0.5ex]
            \frac 1 2 \left( x^2-3x+1 - (1-x)\ln (1-x) \right)
	    &\text{if $1-\omega \le x \le 1/2$,}
	    \\[0.5ex]
            \frac 1 2 \left( x^2+x-1+(1-x)
	    \ln (1-x)-2(1-x)\ln x \right) &\text{if $1/2 \le x \le
	    \omega$.}
        \end{cases}
\end{align*}
This yields the desired formula \eqref{eq:min-density}
on noting that $\omega=(1-\sqrt{5})/2$ and 
\begin{equation}
\label{eq:omega-relation2}
1-\omega=1-\frac{\sqrt{5}-1}{2}=\frac{3-\sqrt{5}}{2}.
\end{equation}

Similarly, using the second characterization of 
    Lemma \ref{lem:ordered-cyclic-characterizations},
we obtain
\begin{align*}
         f_2(y) &= 
	 \frac 6 {p_3} \int_0^{\min \left( y,1-y^2 \right)}
	 \int_{\max (y, (1-x)(1-y))}^{\min \left( 1, \frac {1-x} y \right)}
	 \, dz\, dx
         \\
         &= \frac 6 {p_3} \int_0^{\min \left( y,1-y^2 \right)}
	 \left( \min \left( 1, \frac {1-x} y \right) 
	 - \max (y, (1-x)(1-y)) \right) dx
         \\[1ex]
         &= \frac 6 {p_3} \begin{cases}
        \frac 1 2 \left( 3y^2-y^3 \right)
	&\text{if $ 0 \le y \le 1-\omega$,}
	\\[0.5ex]
        3y-y^2-\frac 1 {2(1-y)} 
	&\text{if $ 1-\omega \le y \le 1/2$,} 
	\\[0.5ex]
        2-y-y^2- \frac 1 {2y} &\text{if $1/2 \le y \le \omega$,}
	\\[0.5ex]
        \frac 1 2 \left( \frac {y^3} 2 - \frac {3y} 2 + 1 \right)
	&\text{if $\omega \le y \le 1$} 
    \end{cases}
\\[1ex]
    &= \begin{cases}
        \frac 3 {p_3} \left( 3x^2-x^3 \right)
	&\text{if $ 0 \le x \le 1 - \omega$,}
	\\[.5ex]
        \frac 6 {p_3} \left( 3x-x^2-\frac 1 {2(1-x)} \right)
	&\text{if $1 - \omega < x \le 1/2$,}
	\\[.5ex]
	f_2(1-x) &\text{if $1/2 < x \le 1$,}
    \end{cases}
\end{align*}
This yields the desired formula \eqref{eq:middle-density}
for $f_2(y)$.

Finally, since, by Lemma \ref{lem:symmetry-ntuples}(iii), a triple
$(x_1,x_2,x_3)$ is cyclic if and only if $(1-x_1,1-x_2,1-x_3)$ is
cyclic, the distribution of $X_3^*$, the largest element in a cyclic
triple, is the same as that of $1-X_1^*$. Thus, we have $f_3(x)=f_1(1-x)$.

This completes the proof of Theorem \ref{thm:main3}.
\end{proof}

\section{Concluding Remarks}
\bigskip

\label{sec:concluding-remarks}

In this section we mention some related questions and conjectures
suggested by our results.

\paragraph{Precise asymptotic behavior of $p_n$.}
Theorem \ref{thm:main2} implies that
\begin{equation*}
e^{-(1+o(1))c_1 n}\le 1-p_n\le e^{-(1+o(1))c_2 n} 
\quad (n\to\infty)
\end{equation*}
holds with $c_1=\ln 4$ and $c_2=\ln (\pi/2)$.   It seems reasonable to
expect that there exists a \emph{single} constant $c$, with $c_2\le c\le
c_1$, such that 
\begin{equation*}
1-p_n=e^{-(1+o(1))cn}\quad (n\to\infty),
\end{equation*}
i.e., that the limit
\begin{equation*}
c=\lim_{n\to\infty}\frac{-\ln(1-p_n)}{n}
\end{equation*}
exits.  If true, the value of $c$ must lie between the constants
$c_2=\ln(\pi/2)=0.451\dots$ and $c_1=\ln 4=1.386\dots$.

\paragraph{Prescribing all pairwise probabilities $P(U_i>U_j)$.}
By definition, a cyclic $n$-tuple is an $n$-tuple of real numbers in
$[0,1]$ that can represent the $n$ probabilities $P(U_{i+1}>U_i)$,
$i=1,\dots,n$, with a suitable choice of independent random variables
$U_i$, $i=1,\dots,n$, satisfying $P(U_i=U_j)=0$ for $i\not=j$.

As a natural extension of this concept, one can ask which arrays 
$x_{ij}$, $i,j=1,\dots,n$, can represent probabilities of the form
\begin{equation}
\label{eq:xij-representation}
x_{ij}=P(U_j>U_i)\quad (i,j=1,\dots,n),
\end{equation}
under the same assumptions on the random variables $U_i$.  This problem
arises in the theory of Social Choice, where it is known under the term
\emph{utility representations}; see  Suck \cite{suck2002} for background
and references.

Obviously, in order for \eqref{eq:xij-representation} to hold, 
it is necessary that $x_{ii}=0$ for all $i$. Moreover,  
the assumption that $P(U_i=U_j)=0$ for $i\not=j$ implies
$x_{ij}=P(U_j>U_i)=1-P(U_i>U_j)=1-x_{ji}$ for $i>j$. Thus the array
$x_{ij}$ is determined by the $\binom{n}{2}$ values $x_{ij}$, $1\le
i<j\le n$. Considering these values as tuples in the
$\binom{n}{2}$-dimensional unit cube, one can then ask for the
probability that a \emph{random} tuple in this cube has a representation 
of the form \eqref{eq:xij-representation}.

When $n=3$, the values $x_{ij}$ are determined by 
$x_{12}=P(U_2>U_1)$, $x_{23}=P(U_3>U_2)$, and
$x_{13}=P(U_3>U_1)=1-P(U_1>U_3)$. Thus we see that a representation
\eqref{eq:xij-representation} is possible if and only if the triple
$(x_{12},x_{23},1-x_{13})$ is cyclic, so the problem is equivalent to
the problem considered in Theorem \ref{thm:main1}. 

In the general case, however, the two problems are different.  Indeed,
the  question of characterizing arrays $x_{ij}$ that have
representations of the form \eqref{eq:xij-representation} seems to be an
extraordinarily hard problem that remains largely unsolved;  we refer to Suck
\cite{suck2002} for further discussion and some partial results. 

\paragraph{Representations with dependent random variables $U_i$.}
In our definition of cyclic tuples we assumed the random
variables $U_i$ to be independent. This is a natural assumption that is made in most of the mathematical literature on nontransitivity phenomena.  In the context of dice rolls the assumption reflects the fact that if $n$ dice are rolled, then the values showing on the faces of these dice are indeed independent. 

However, there also exist  well-known
nontransitivity paradoxes---especially in voting theory and social choice theory---in which the underlying random variables are dependent. Thus, it would be of interest to study the analogs of cyclic and nontransitive tuples if the 
independence assumption on $U_i$ is dropped.  We refer to Marengo et al. \cite{marengo2020} for some recent results in this direction, and 
to Suck \cite{suck2002}
for an in-depth discussion of the differences between dependent and
independent representations of the form \eqref{eq:xij-representation}.

\bibliographystyle{amsplain}


\begin{thebibliography}{10}

\bibitem{andre1881}
D\'esir\'e Andr\'e, \emph{Sur les permutations altern\'ees}, J. Math. Pures
  Appl. \textbf{7} (1881), 167--184.

\bibitem{angel-davis2018}
Levi Angel and Matt Davis, \emph{Properties of sets of nontransitive dice with
  few sides}, Involve \textbf{11} (2018), no.~4, 643--659. \MR{3778917}

\bibitem{bogdanov2010}
I.I. Bogdanov, \emph{Nontransitive {R}oulette}, Mat. Prosveshch. \textbf{3}
  (2010), 240--255.

\bibitem{bondy2006}
Adrian Bondy, Jian Shen, St\'{e}phan Thomass\'{e}, and Carsten Thomassen,
  \emph{Density conditions for triangles in multipartite graphs}, Combinatorica
  \textbf{26} (2006), no.~2, 121--131. \MR{2223630}

\bibitem{buhler2014}
Joe Buhler, Ron Graham, and Al~Hales, \emph{Maximally nontransitive dice},
  Amer. Math. Monthly \textbf{125} (2018), no.~5, 387--399. \MR{3785874}

\bibitem{li-chien1961}
Li-Chien Chang, \emph{On the maximin probability of cyclic random
  inequalities}, Scientia Sinica \textbf{10} (1961), no.~5, 499.

\bibitem{conrey2016}
Brian Conrey, James Gabbard, Katie Grant, Andrew Liu, and Kent~E. Morrison,
  \emph{Intransitive dice}, Math. Mag. \textbf{89} (2016), no.~2, 133--143.
  \MR{3510988}

\bibitem{gardner1970}
Martin Gardner, \emph{The paradox of nontransitive dice and the elusive
  principle of indifference}, Scientific American \textbf{223} (1970), no.~6,
  110--115.

\bibitem{gilson2012}
E.~Gilson, C.~Cooley, W.~Ella, M.~Follett, and L.~Traldi, \emph{The {E}fron
  dice voting system}, Soc. Choice Welf. \textbf{39} (2012), no.~4, 931--959.
  \MR{2983388}

\bibitem{hulko2019}
Artem Hulko and Mark Whitmeyer, \emph{A game of nontransitive dice}, Math. Mag.
  \textbf{92} (2019), no.~5, 368--373. \MR{4044971}

\bibitem{komisarski2021}
Andrzej Komisarski, \emph{Nontransitive random variables and nontransitive
  dice}, Amer. Math. Monthly (2021), to appear.

\bibitem{kral2004}
Daniel Kr\'{a}l, \emph{Locally satisfiable formulas}, Proceedings of the
  {F}ifteenth {A}nnual {ACM}-{SIAM} {S}ymposium on {D}iscrete {A}lgorithms,
  ACM, New York, 2004, pp.~330--339. \MR{2291069}

\bibitem{lebedev2019}
A.~V. Lebedev, \emph{The nontransitivity problem for three continuous random
  variables}, Autom. Remote Control \textbf{80} (2019), no. 6, 
  1058--1068. 
\url{https://doi.org/10.1134/S0005117919060055}

\bibitem{lovasz-pelikan1973}
L.~Lov\'{a}sz and J.~Pelik\'{a}n, \emph{On the eigenvalues of trees}, Period.
  Math. Hungar. \textbf{3} (1973), 175--182. \MR{416964}

\bibitem{marengo2020}
James~E. Marengo, Quinn~T. Kolt, and David~L. Farnsworth, \emph{An upper bound
  for a cyclic sum of probabilities}, Statist. Probab. Lett. \textbf{165}
  (2020), 108861. \MR{4118940}

\bibitem{moon-moser1967}
J.~W. Moon and L.~Moser, \emph{Generating oriented graphs by means of team
  comparisons}, Pacific J. Math. \textbf{21} (1967), 531--535. \MR{216984}

\bibitem{nagy2011}
Zolt\'{a}n~L\'{o}r\'{a}nt Nagy, \emph{A multipartite version of the {T}ur\'{a}n
  problem---density conditions and eigenvalues}, Electron. J. Combin.
  \textbf{18} (2011), no.~1, Paper 46, 15. \MR{2776822}

\bibitem{steinhaus-trybula1959}
Hugo Steinhaus and S.~Trybula, \emph{On a paradox in applied probabilities},
  Bull. Acad. Polon. Sci \textbf{7} (1959), no.~67-69, 108.

\bibitem{suck2002}
Reinhard Suck, \emph{Independent random utility representations}, Math. Social
  Sci. \textbf{43} (2002), no.~3, 371--389. \MR{2072963}

\bibitem{trevisan2004}
Luca Trevisan, \emph{On local versus global satisfiability}, SIAM J. Discrete
  Math. \textbf{17} (2004), no.~4, 541--547. \MR{2085212}

\bibitem{trybula1961}
S.~Trybula, \emph{On the paradox of three random variables}, Zastos. Mat.
  \textbf{5} (1960/61), 321--332. \MR{126865}

\bibitem{trybula1965}
\bysame, \emph{On the paradox of {$n$} random variables}, Zastos. Mat.
  \textbf{8} (1965), 143--156. \MR{190970}

\bibitem{usiskin1964}
Zalman Usiskin, \emph{Max-min probabilities in the voting paradox}, Ann. Math.
  Statist. \textbf{35} (1964), 857--862. \MR{161353}

\end{thebibliography}


\providecommand{\bysame}{\leavevmode\hbox to3em{\hrulefill}\thinspace}
\providecommand{\MR}{\relax\ifhmode\unskip\space\fi MR }
\providecommand{\MRhref}[2]{%
  \href{http://www.ams.org/mathscinet-getitem?mr=#1}{#2}
}
\providecommand{\href}[2]{#2}

\end{document}